
\documentclass[12pt]{amsart}


%

\usepackage{wasysym}
\usepackage{amsmath}
\usepackage{amssymb}
\usepackage{amsthm}
\usepackage{tikz}
\usetikzlibrary{matrix,arrows,backgrounds,shapes.misc,shapes.geometric,patterns,calc,positioning}
\usetikzlibrary{calc,shapes}
\usepackage{wrapfig}
\usepackage{epsfig}  
\usepackage[all]{xy}
\usepackage{caption}
\usepackage[margin=1in]{geometry}
\usepackage{color}	 %

\input{xy}
\xyoption{poly}
\xyoption{2cell}
\xyoption{all}

\usepackage{enumitem}
\usepackage{lscape}

\usetikzlibrary{calc,shapes}
\usetikzlibrary{matrix,arrows,backgrounds,shapes.misc,shapes.geometric,patterns,calc,positioning}
\usetikzlibrary{calc,shapes}

\newcommand{\calg}{\mathcal{G}}

\makeatletter
\def\Ddots{\mathinner{\mkern1mu\raise\p@
\vbox{\kern7\p@\hbox{.}}\mkern2mu
\raise4\p@\hbox{.}\mkern2mu\raise7\p@\hbox{.}\mkern1mu}}
\makeatother







\newtheorem{thm}{Theorem}[section]
\newtheorem{prop}[thm]{Proposition}
\newtheorem{lem}[thm]{Lemma}
\newtheorem{cor}[thm]{Corollary}
\newtheorem{conjecture}[thm]{Conjecture}




\theoremstyle{definition}
\newtheorem{definition}[thm]{Definition}
\newtheorem{example}[thm]{Example}




\theoremstyle{remark}

\newtheorem{remark}[thm]{Remark}


\numberwithin{equation}{section}


\DeclareMathSizes{5}{3}{2}{2}


\newcommand\restr[2]{{
  \left.\kern-\nulldelimiterspace 
  #1 
  \vphantom{\big|} 
  \right|_{#2} 
  }}



\def\QQ{\mathbb{Q}}
\def\RR{\mathbb{R}}

\newcommand{\zd}{\delta}





\def\ZZ{\mathbb{Z}}

\newcommand{\cf}{\textup{c}} 


\definecolor{cadmiumgreen}{rgb}{0.0, 0.42, 0.24}



\begin{document}
\title{Continued Fractions and orderings on the Markov numbers}
\subjclass[2000]{Primary: 13F60, 
Secondary: 11A55, 
11B83, 
and  30B70
}
\keywords{Cluster algebras, continued fractions, snake graphs, markov numbers}
\author{Michelle Rabideau}
\address{Department of Mathematics, University of Hartford, 
West Hartford, CT 06117-1545,USA}
\email{Rabideau@hartford.edu}
\author{Ralf Schiffler}\thanks{Both authors were supported by the NSF-CAREER grant  DMS-1254567, and by the University of Connecticut. The second author was also supported by the NSF grant DMS-1800860.}
\address{Department of Mathematics, University of Connecticut, 
Storrs, CT 06269-1009, USA}
\email{schiffler@math.uconn.edu}




%
%
%
\begin{abstract}
	
	Markov numbers are integers that appear in the solution triples of the Diophantine equation, $x^2+y^2+z^2=3xyz$, called the Markov equation.  A classical topic in number theory, these numbers are related to many areas of mathematics such as combinatorics, hyperbolic geometry, approximation theory and cluster algebras. 
	
	There is a natural map from the rational numbers between zero and one to the Markov numbers. In this paper, we prove  a  conjecture seen in Martin Aigner's book, {\em Markov's theorem and 100 years of the uniqueness conjecture}, that determine an ordering on subsets of the Markov numbers based on their corresponding rational.  The proof relies on a relationship between Markov numbers and continuant polynomials which originates in Frobenius' 1913 paper. 
\end{abstract}

 \maketitle

\setcounter{tocdepth}{1}
\tableofcontents


\section{Introduction}

 The purpose of this article is to prove a conjecture on Markov numbers from \cite{A}.


\begin{definition}
A Markov number (alternate spelling Markoff number) is any number in the triple $(x,y,z)$ of positive integer solutions to the Diophantine equation $x^2+y^2+z^2  = 3xyz$, known as the Markov equation.
\end{definition}

We consider the Markov equation rather than the more general Diophantine equation,\\ $x^2+y^2+z^2 = kxyz$, because for $k \neq 1,3$, this Diophantine equation has only the trivial solution $(0,0,0)$. Solutions to this Diophantine equation when $k=1$ are multiples of 3 times solutions to the Markov equation. Hence the Markov equation is the equation of interest. For facts about the Markov numbers we refer to \cite{A} and \cite{Re}.

The first few triples to satisfy the Markov equation are the triples containing repeated values, i.e. the singular triples $(1,1,1)$ and $(1,2,1)$. All of the other solutions are non-singular triples, some of which are depicted in Figure~\ref{markovtree}. Since the set of Markov numbers is the union of the entries in the triples, the first few Markov numbers are 1, 2, 5, 13, 29, 34, 89, 169, 194, 233, 433, 610, 985, etc.  

 Every Markov number appears as the maximum of some Markov triple. Notice that with the exception of the first non-singular triple, we only underline the maximum of each triple in the tree in Figure~\ref{markovtree}. It is known that these underlined values provide a complete list of the Markov numbers. However, it is an open conjecture by Frobenius from 1913 whether each Markov number appears as the maximum of a {\em unique} Markov triple.
 

 The Markov numbers can be indexed by the rational numbers between zero and one. This is done by comparing the combinatorially identical trees in Figure~\ref{markovtree} and Figure~\ref{fareytree}. Figure~\ref{markovtree} is the beginning of the binary tree called the Markov tree. Each branch of the tree is constructed in a specific manner. From the vertex $(x,y,z)$ the branch leading below and to the left will be $(x,3xy-z ,y)$ and below to the right will be $(y,3yz-x,z)$.

\begin{figure}
\scalebox{.8} 	{\xymatrix{ &&&(\underline{1},\underline{5},\underline{2}) \ar[lld]\ar[rrd]&&&\\
		&(1,\underline{13},5)\ar[ld]\ar[rd]&&&&(5,\underline{29},2)\ar[ld]\ar[rd]&\\
		(1,\underline{34},13)&&(13,\underline{194},5) && (5,\underline{433},29) &&(29,\underline{169},2)  }} 
	\captionof{figure}{Markov Tree (non-singular triples). The underlined values are $m_{p/q}$ where $p/q$ are values in the same position in the Farey tree.\\} \label{markovtree}
	\scalebox{1.1}{
	\scalebox{.8} {	\xymatrix{ &&&(\underline{\frac{0}{1}},\underline{\frac{1}{2}},\underline{\frac{1}{1}}) \ar[lld]\ar[rrd]&&&\\
			&(\frac{0}{1},\underline{\frac{1}{3}},\frac{1}{2})\ar[ld]\ar[rd]&&&&(\frac{1}{2},\underline{\frac{2}{3}},\frac{1}{1})\ar[ld]\ar[rd]&\\
			(\frac{0}{1},\underline{\frac{1}{4}},\frac{1}{3})&&(\frac{1}{3},\underline{\frac{2}{5}},\frac{1}{2}) && (\frac{1}{2},\underline{\frac{3}{5}},\frac{2}{3}) &&(\frac{2}{3},\underline{\frac{3}{4}},\frac{1}{1})  }}}
	\captionof{figure}{Farey Tree } \label{fareytree}
\end{figure}

In Figure~\ref{fareytree} we have the Farey tree, a binary tree of Farey triples. When starting with a triple, $\left(\frac{a}{b}, \frac{a+c}{b+d},\frac{c}{d}\right)$, we produce the next triple to the left and right respectively by $$\left(\frac{a}{b},\frac{a+(a+c)}{b+(b+d)}, \frac{a+c}{b+d}\right) \text{ and } \left( \frac{a+c}{b+d},\frac{(a+c)+c}{(b+d)+d},\frac{c}{d}\right).$$

Since the underlined values in the Farey tree provide a list of every distinct rational number from zero to one, we can correspond $\QQ_{[0,1]}$ to the Markov numbers.  We refer to a Markov number as $m_{p/q}$ where $p<q$ and $q, p$ are relatively prime positive integers.

 Therefore we are now ready to state the  fixed numerator 
 conjecture \cite[10.11]{A} that is the main topic of this paper. 

\begin{conjecture} { \it (Fixed Numerator Conjecture)\label{constant num conjecture}
	Let $p,q$ and $i$ be positive integers such that $p<q$, $\gcd(q,p) =1$ and $\gcd(q+i,p) = 1$, then $m_{p/q} < m_{p/(q+i)}$.}
\end{conjecture}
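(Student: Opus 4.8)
The plan is to follow the route announced in the abstract: translate the problem into the language of continued fractions through the cluster algebra of the once-punctured torus, and then settle the resulting arithmetic statement by a monotonicity analysis of continuants. For the first step, recall that the Markov quiver is the exchange quiver of an ideal triangulation of the once-punctured torus, that the cluster variables are indexed by arcs, the arcs by slopes $p/q$, and that $m_{p/q}$ is obtained by evaluating the cluster variable of the arc of slope $p/q$ at the all-ones seed. By the expansion formula for cluster algebras from surfaces this evaluation counts the perfect matchings of the (band) snake graph $\calg_{p/q}$ of the arc, and by the snake-graph/continued-fraction correspondence that number equals the numerator $N[\mathbf{c}_{p/q}]$ of an explicit, palindromic continued fraction $[\mathbf{c}_{p/q}]=[c_1,\dots,c_\ell]$ whose entry sequence $\mathbf{c}_{p/q}$ is read off from the ordinary continued fraction expansion of $q/p$. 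After this, Conjecture~\ref{constant num conjecture} becomes the purely arithmetic inequality $N[\mathbf{c}_{p/q}]<N[\mathbf{c}_{p/(q+i)}]$.

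The second step is to understand how $\mathbf{c}_{p/q}$ varies with $q$. Write $q/p=[a_1,\dots,a_n]$ with $a_1=\lfloor q/p\rfloor\ge 1$ and $s=q-a_1p$, so that $[a_2,\dots,a_n]$ is the continued fraction of $p/s$; since $q<q+i$ forces $q/p<(q+i)/p$, the expansion of $(q+i)/p$ either has a larger leading entry (a ``carry'') or has the same leading entry $a_1$ and a tail equal to the continued fraction of $p/s'$ with $s'=(q+i)\bmod p$. In the latter case the only change in $\mathbf{c}_{p/q}$ is that the block coming from $p/s$ is replaced by the block coming from $p/s'$ — and because $s,s'<p\le q$, this is governed by a strictly smaller instance in which, for the fraction of denominator $p$, it is the \emph{numerator} that grows while the denominator stays fixed. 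Thus Conjecture~\ref{constant num conjecture} and the companion Fixed Denominator Conjecture naturally fall under a single induction, which I would run on $p+q$; the base case $p=1$ is immediate because $q/1=[q]$ forces $\mathbf{c}_{1/q}$ to be a string of $2q$ ones, so $m_{1/q}=F_{2q+1}$ is visibly strictly increasing in $q$.

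The heart of the argument — the ``key step'' of the abstract — is to pin down the sign of $N[\mathbf{c}']-N[\mathbf{c}]$ when $\mathbf{c}\mapsto\mathbf{c}'$ is one of the elementary moves above: incrementing a single entry, or replacing a tail block by a ``bigger'' tail block. The tools are the three-term recursion $N[a_1,\dots,a_n]=a_1N[a_2,\dots,a_n]+N[a_3,\dots,a_n]$, the reversal symmetry $N[a_1,\dots,a_n]=N[a_n,\dots,a_1]$, and the matrix form
\[
\begin{pmatrix}N[a_1,\dots,a_n]&N[a_1,\dots,a_{n-1}]\\ N[a_2,\dots,a_n]&N[a_2,\dots,a_{n-1}]\end{pmatrix}=\begin{pmatrix}a_1&1\\1&0\end{pmatrix}\cdots\begin{pmatrix}a_n&1\\1&0\end{pmatrix},
\]
whose entrywise positivity makes a single increment or insertion increase $N$. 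The obstacle is that the palindromic construction $\mathbf{c}_{p/q}$ has length depending on $n$, so when the passage from $q$ to $q+i$ forces a carry the sequence is not altered by one local move but reshaped throughout its interior, and one ends up comparing continuants of sequences of different lengths that differ in several places at once; the sign of the difference is then not manifest. I expect to handle this by strengthening the inductive hypothesis so that it controls not only $N[\mathbf{c}_{p/q}]$ but the pair consisting of $N[\mathbf{c}_{p/q}]$ and the continuant of $\mathbf{c}_{p/q}$ with one end deleted — equivalently the numerator and the denominator of the associated convergent — so that each global reshaping factors as a composition of the elementary moves and the inductive step reduces to a bounded, explicit set of positivity checks for expressions of the form $N[\text{after}]-N[\text{before}]$. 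Carrying out that bookkeeping uniformly, so that every such difference comes out positive, is the part I expect to be delicate.
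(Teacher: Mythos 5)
Your first step (the translation of $m_{p/q}$ into the numerator of the continued fraction attached to the Markov snake graph) is correct and is exactly the reduction the paper performs. But the heart of the matter --- showing that the continuant actually increases --- is not established in your proposal; you explicitly defer it (``the sign of the difference is then not manifest \dots\ is the part I expect to be delicate''), and that deferred step is precisely where all the content lies. Your plan is to induct on $p+q$ via the Euclidean algorithm for $q/p$, reducing a fixed-numerator instance to a fixed-denominator instance for the smaller pair $(s,p)$, and to treat the ``carry'' case by composing elementary moves on continuants. The difficulty you correctly identify is fatal to this route as described: when the leading partial quotient of $q/p$ changes, the Christoffel word --- and hence the entire sequence $\mathbf{c}_{p/q}$ --- is reshaped globally, the two sequences have different lengths and differ in many interior positions, and no bounded list of ``entrywise increase'' moves on transfer matrices is exhibited that connects them while preserving the sign of the difference. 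Strengthening the hypothesis to track the pair (numerator, denominator of the convergent) is a reasonable instinct but is not carried out, and it is not clear it closes the induction.

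What the paper does instead is isolate a precise combinatorial relationship between the two words that makes the comparison tractable: the snake-graph continued fraction of $m_{p/q}$ consists of exactly $p+q-1$ ``replaceable entries,'' each equal to $2$ or to $1,1$, so that the words for $m_{p/q}$ and $m_{(p+1)/q}$ (or $m_{p/(q+1)}$) agree up to a set of $1,1\leftrightarrow 2$ swaps together with one extra $2$ appended at the end. The whole analytic burden is then carried by Theorem \ref{interchageadd2pos}, which shows that appending a $2$ beats any combination of such swaps; this in turn rests on the exact difference formula of Lemma \ref{lem replacedifference} and the telescoping/induction of Lemmas \ref{2 to 11 equality} and \ref{2 to 1,1 positivity}. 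Note also that the paper proves the single-step inequalities $m_{p/q}<m_{p/(q+1)}$ and $m_{p/q}<m_{(p+1)/q}$ for \emph{all} $p<q$ by extending $m_{p/q}$ to non-coprime indices via lattice-path snake graphs, and then iterates to reach step size $i$; your proposal would need some analogous device, since the intermediate fractions need not be reduced. To complete your argument you would need either to supply the global positivity statement yourself (essentially reproving Theorem \ref{interchageadd2pos}) or to replace the Euclidean induction by the replacement-counting structure above.
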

%

\begin{example}
This example highlights some of the orderings implied by Conjecture \ref{constant num conjecture}.
\[	\begin{array}{cccccccccccccccccc}
	m_{1/2} &<& m_{1/3} &<& m_{1/4} &<& m_{1/5} &<& m_{1/6} &<& \dots\\
		5 &<& 13 &<& 34 &<& 89 &<& 233 &<& \dots\\\\
	m_{2/3} &<& m_{2/5} &<& m_{2/7} &<& m_{2/9} &<& m_{2/11} &<& \dots\\
		29 &<& 194 &<& 1,325 &<& 9,077 &<& 62,210 &<& \dots\\
	\end{array}\]
\end{example}

 \begin{remark} In addition to Conjecture \ref{constant num conjecture}, Aigner presents a fixed denominator conjecture $\left( m_{p/q} < m_{(p+i)/q}\right)$ and a fixed sum conjecture $\left( m_{p/q} < m_{(p-i)/(q+i)}\right)$, which are both still open problems.
\end{remark} 

The interest of Conjecture \ref{constant num conjecture} is two-fold. On the one hand, it is a subcase of Frobenius' uniqueness conjecture and on the other hand, if the uniqueness conjecture holds, then the Markov numbers induce a total ordering $<_M$ of the rational numbers between zero and one defined by $$\dfrac{p}{q} \;<_M\; \dfrac{r}{s} \hspace{20pt} \Leftrightarrow \hspace{20pt}m_{p/q}\; <\; m_{r/s}.$$
Conjecture \ref{constant num conjecture} is a step towards understanding this total order.

 We are now ready to state our main result.
\begin{thm}
 \label{mainthm} 
The conjecture \ref{constant num conjecture} 
holds.
\end{thm}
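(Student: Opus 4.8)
The plan is to transport the whole problem into the combinatorics of the cluster algebra of the once-punctured torus. There the Markov number $m_{p/q}$ arises --- after the standard specialization of cluster variables to $1$ --- as the number of perfect matchings of a snake graph $\calg_{p/q}$ attached to the arc of slope $p/q$, and hence, via the snake-graph/continued-fraction correspondence, as the numerator $N[a_1,\dots,a_n]$ of a continued fraction $[a_1,\dots,a_n]$ determined combinatorially by the position of $p/q$ in the Farey tree. My first step is to make this dictionary precise: exhibit the assignment $p/q \mapsto (a_1,\dots,a_n)$ (its length and entries being read off from the $LR$-word of the Farey path, together with the necessary boundary normalizations), check that it is well defined up to the standard relation $[\dots,a_n] = [\dots,a_n-1,1]$ (which in any case leaves $N$ unchanged), and record the identity $m_{p/q} = N[a_1,\dots,a_n]$. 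After this reformulation, Conjecture \ref{constant num conjecture} is the statement that the continued fraction attached to $p/(q+i)$ has strictly larger numerator than the one attached to $p/q$, and Conjecture \ref{constant denom conjecture} the analogous statement for $(p+i)/q$ against $p/q$.

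Second, I would analyze purely combinatorially how the entry sequence changes under $q \mapsto q+i$ and under $p \mapsto p+i$. Tracking the continued fraction of $p/q$ recursively along the Farey tree should show that fixing the numerator and enlarging $q$ (respectively, fixing the denominator and enlarging $p$) effects a local, explicitly prescribed move on $(a_1,\dots,a_n)$ --- splicing in a short block and/or incrementing a single boundary entry, the size of the move being controlled by $i$. Since the admissible denominators (resp.\ numerators) need not be consecutive integers --- one must step over those not coprime to $p$ (resp.\ to $q$) --- this move should be formulated uniformly in $i$ rather than obtained by iterating the case $i=1$. Performing this analysis for both operations reduces Theorem \ref{mainthm} to a single assertion: if $[b_1,\dots,b_m]$ is obtained from $[a_1,\dots,a_n]$ by the relevant move, then $N[a_1,\dots,a_n] < N[b_1,\dots,b_m]$.

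The core of the argument --- the step singled out in the abstract --- is this monotonicity of the numerator, which I would establish by analyzing the difference $N[b_1,\dots,b_m] - N[a_1,\dots,a_n]$ directly, using two complementary tools. Combinatorially, $N[a_1,\dots,a_n]$ counts perfect matchings of $\calg[a_1,\dots,a_n]$, and the snake graph of the modified continued fraction is related to the original one by a grafting operation $\graft_{s,t}(\calg_1,\calg_2)$ with a fixed gadget, for which the behaviour of the matching count is controlled by the snake-graph calculus; one then writes down an explicit injection from matchings of $\calg[a_1,\dots,a_n]$ to matchings of $\calg[b_1,\dots,b_m]$ and exhibits a matching of the latter outside the image, forcing the strict inequality. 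Recursively, the identity $N[c_1,\dots,c_k] = c_k\,N[c_1,\dots,c_{k-1}] + N[c_1,\dots,c_{k-2}]$ and its reversal symmetry let one expand $N[b_1,\dots,b_m]$ in terms of the numerators and denominators of truncations of $[a_1,\dots,a_n]$ and verify positivity term by term. Applying the resulting strict inequality (iterated along a chain of admissible values, if needed) then yields Conjectures \ref{constant num conjecture} and \ref{constant denom conjecture}, and hence Theorem \ref{mainthm}.

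The main obstacle I anticipate is Step 2 and its interface with Step 3: pinning down the exact shape of the continued fraction of $p/(q+i)$ relative to that of $p/q$ is delicate, since the two expansions can differ in length and parity, the normalization $[\dots,a_n] = [\dots,a_n-1,1]$ must be invoked at precisely the right places, and the move must remain uniform across the jumps over non-coprime denominators (resp.\ numerators). Once the move is captured in a clean normal form, the positivity of the numerator difference ought to follow from a robust injection-or-recursion argument; the remaining delicate point is to confirm strictness in every case, including the small and degenerate configurations near the root of the Farey tree.
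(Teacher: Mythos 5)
The overall frame of your plan (Markov number $\to$ snake graph $\to$ numerator of a continued fraction with entries $2$ and $1,1$) is the same as the paper's, but there is a genuine gap in your Step 2 that propagates into Step 3. You assume that passing from $p/q$ to $(p+1)/q$ (or to $p/(q+1)$) ``effects a local, explicitly prescribed move on $(a_1,\dots,a_n)$ --- splicing in a short block and/or incrementing a single boundary entry.'' That is not what happens. Compare $m_{3/7}=N[2,\,1,1,\,2,\,2,\,1,1,\,2,\,2,\,1,1,\,2]$ with $m_{4/7}=N[2,\,2,\,2,\,1,1,\,2,\,2,\,1,1,\,2,\,2,\,2]$: the two sequences differ at many positions scattered along the entire word, with some replaceable entries changing from $1,1$ to $2$ and others from $2$ to $1,1$, in addition to the single extra replaceable entry. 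Because these changes have mixed sign (each $1,1\mapsto 2$ increases the numerator and each $2\mapsto 1,1$ decreases it, by the identity $N[\mu_1,1,1,\mu_2]-N[\mu_1,2,\mu_2]=N[\mu_1^-]\,N[^-\mu_2]$), neither of your proposed tools applies as stated: there is no evident injection of perfect matchings of the smaller snake graph into the larger one, since the larger graph is not obtained from the smaller by adding structure, and a single grafting step does not capture the modification. The crux your plan fails to isolate is the inequality that one appended $2$ at the end of the continued fraction strictly dominates the \emph{total} loss incurred by an arbitrary collection of $2\mapsto 1,1$ replacements; this is the content of the paper's Theorem \ref{interchageadd2pos}, and it needs its own induction (Lemmas \ref{2 to 11 equality} and \ref{2 to 1,1 positivity}) because the negative contributions $\sum_j N[\delta_j^-]N[^-\epsilon_j]$ must be bounded by $N[\dots,\mu_k,1]$ globally, not term by term.

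A secondary divergence: you propose formulating the move uniformly in $i$ so as to step over denominators (resp.\ numerators) not coprime to $p$ (resp.\ $q$). The paper instead extends the definition of $m_{p/q}$ to non-coprime indices via lattice-path snake graphs, proves only the increment-by-one statement (Theorem \ref{markovordering thm}), and iterates. That route is simpler and entirely avoids the uniform-in-$i$ normal-form bookkeeping that you yourself identify as the main obstacle; if you keep your version, you take on that difficulty for no gain.
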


The proof relies on a connection between Christoffel paths and Markov numbers which provides a formulation of Markov numbers as continuant polynomials. This result is already contained in Frobenius' paper, \cite{F}. We thank the referee for pointing this out. For a modern reference see Reutenauer \cite{Re}. Originally, we knew of Frobenius' result from a cluster algebra perspective which gives an alternative, albeit roundabout proof. 

 It is shown in \cite{BBH, P} that Markov triples are related to the cluster algebra of the torus with one puncture; namely, the Markov tree is obtained from the exchange graph  of the cluster algebra by specializing the initial cluster variables to 1. Then, via a formula from \cite{MSW}, one can express each Markov number as the number of perfect matchings of an associated graph, called a Markov snake graph. Finally, using results of \cite{CS4,CS5}, each Markov number can then be expressed as the numerator of a very particular continued fraction.

 Having the description of Markov numbers as continuant polynomials, the main steps in the proof of the conjecture are the following.

Theorem \ref{lem replacedifference}, which we think is interesting in its own right, is a new identity on continuant polynomials. It states that if one changes two consecutive interior entries 1,1 into one entry 2, the value of the continuant polynomial increases. 

Theorem \ref{thm 01} is a result on the continued fraction $c_{p/q}$ of a Markov number.  It states that the value of the continued fraction of the reversal of a certain initial segment of $c_{p/q}$ is smaller than the value of the continued fraction of a corresponding terminal segment. The proof of this result involves convergents of continued fractions as well as geometric arguments. 

The main result, Theorem \ref{markovordering thm}, then follows from the above and computations with continuant polynomials.


The paper is organized as follows. We start by reviewing basic properties of continued fractions in Section \ref{sect introcf} and  prove our new result on continued fractions in Section \ref{sect continued frac}.  In Section \ref{sect Markov}, we discuss the connection between Christoffel words and Markov numbers through the lense of cluster algebras which allows us to visualize continuant polynomials as snake graphs.  Section \ref{sect conjectures} is devoted to the proof our main theorem.

\section{Basic properties of continued fractions}\label{sect introcf}In this section, we provide the necessary definitions and properties of continued fractions that will be necessary to prove our results.
We restrict ourselves in this paper to finite continued fractions with non-negative integer entries. For $a_i\in \ZZ_{\geq 0}$, $a_n\neq0$,  we define $$[a_1,\dots, a_n]  := a_1 + \cfrac{1}{a_{2} + \cfrac{1}{\ddots + \cfrac{1}{a_n}}}.$$

When we evaluate a  continued fraction, we obtain a reduced rational number with numerator  we denote by $N[a_1,\dots, a_n]$, which is known as the continuant polynomial of the variables $a_1, \dots, a_n$. For example, see \cite{GKP}.   Proposition \ref{prop basicNcfprop} below lists some basic properties.

\begin{prop} \label{prop basicNcfprop}  Let $a_i\in \ZZ_{\geq 0}$. 
	\begin{equation} \label{eqnnumcutfromhead}
	N[a_1,\dots, a_n] =a_1N[a_2,\dots, a_n] +N[a_3,\dots, a_n]
	\end{equation}
	\begin{equation}\label{eqnnumcutfromtail}
	N[a_1,\dots, a_n] =a_nN[a_1,\dots, a_{n-1}] +N[a_1,\dots, a_{n-2}].
	\end{equation}
		\begin{equation}\label{eqn 112end}
	[a_1,\dots, a_n,1,1] = [a_1,\dots, a_n,2]
	\end{equation}
		\begin{equation}\label{eqn 112front}
	N[1,1,a_1,\dots, a_n] = N[2, a_1,\dots, a_n]
	\end{equation}
	\begin{equation}\label{eqnreversal}
	N[a_1,\dots, a_n] = N[a_n,\dots,a_1]
	\end{equation}
	\begin{equation}
	[a_1,\dots, a_i] < [a_1,\dots,a_n] \textup{ if $i$ is odd:}
	\end{equation}
	\begin{equation}
	[a_1,\dots, a_i] > [a_1,\dots,a_n] \textup{ if $i$ is even}.
	\end{equation}
\end{prop}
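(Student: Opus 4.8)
The plan is to route all five identities through the standard encoding of a continued fraction by a product of $2\times 2$ matrices. For $a_1,\dots,a_n\in\ZZ_{\geq 0}$ with $a_n\neq 0$ put
\[
M(a_1,\dots,a_n):=\begin{pmatrix} a_1 & 1\\ 1 & 0\end{pmatrix}\begin{pmatrix} a_2 & 1\\ 1 & 0\end{pmatrix}\cdots\begin{pmatrix} a_n & 1\\ 1 & 0\end{pmatrix},
\]
with the conventions $M(\,)=\mathrm{Id}$ and $N[\,]=1$. The first step is to prove, by induction on $n$, that
\[
M(a_1,\dots,a_n)=\begin{pmatrix} N[a_1,\dots,a_n] & N[a_1,\dots,a_{n-1}]\\ N[a_2,\dots,a_n] & N[a_2,\dots,a_{n-1}]\end{pmatrix};
\]
the inductive step peels off the last factor, $M(a_1,\dots,a_n)=M(a_1,\dots,a_{n-1})\left(\begin{smallmatrix} a_n & 1\\ 1 & 0\end{smallmatrix}\right)$, and uses $[a_1,\dots,a_n]=[a_1,\dots,a_{n-1}+\tfrac1{a_n}]$ together with the convergent recursion, while $\det M(a_1,\dots,a_n)=(-1)^n$ guarantees that the columns are genuinely the reduced numerator/denominator pairs and not common multiples of them.

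Granting this, (\ref{eqnnumcutfromhead}) and (\ref{eqnnumcutfromtail}) are immediate: reading the $(1,1)$-entry of $\left(\begin{smallmatrix} a_1 & 1\\ 1 & 0\end{smallmatrix}\right)M(a_2,\dots,a_n)$ gives the first, and reading the $(1,1)$-entry of $M(a_1,\dots,a_{n-1})\left(\begin{smallmatrix} a_n & 1\\ 1 & 0\end{smallmatrix}\right)$ gives the second. For the reversal identity (\ref{eqnreversal}) the point is that each elementary factor $\left(\begin{smallmatrix} a_i & 1\\ 1 & 0\end{smallmatrix}\right)$ is symmetric, so transposing the product reverses the order of the factors, $M(a_1,\dots,a_n)^{\mathsf T}=M(a_n,\dots,a_1)$; since transposition fixes the $(1,1)$-entry, we get $N[a_1,\dots,a_n]=N[a_n,\dots,a_1]$.

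It remains to handle (\ref{eqn 112end}) and (\ref{eqn 112front}). For (\ref{eqn 112end}) I would argue straight from the definition: the last three partial denominators of $[a_1,\dots,a_n,1,1]$ contribute $a_n+\cfrac{1}{\,1+\cfrac{1}{1}\,}=a_n+\tfrac12$, which is exactly what the last two partial denominators $a_n,2$ of $[a_1,\dots,a_n,2]$ contribute, so the two continued fractions are equal as rational numbers and in particular have equal numerators. Then (\ref{eqn 112front}) follows by sandwiching (\ref{eqn 112end}) between two applications of (\ref{eqnreversal}): $N[1,1,a_1,\dots,a_n]=N[a_n,\dots,a_1,1,1]=N[a_n,\dots,a_1,2]=N[2,a_1,\dots,a_n]$.

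I expect the only genuinely fiddly point to be the base of the matrix induction: pinning down the conventions ($N[\,]=1$, the empty continued fraction as $1/0$, and the short tails appearing as matrix entries) and the coprimality/positivity check that identifies the matrix entries with the reduced numerator and denominator. Once that is in place, all five formulas are purely formal consequences. As an alternative to the matrix picture, (\ref{eqnnumcutfromhead}) and (\ref{eqnnumcutfromtail}) can be read off directly from the definition of the continued fraction, and then (\ref{eqn 112end})–(\ref{eqnreversal}) follow by short inductions on $n$ using those two recursions; but the matrix formulation is what makes the symmetry behind (\ref{eqnreversal}) and (\ref{eqn 112front}) transparent.
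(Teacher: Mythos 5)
Your proof is correct and complete; the only thing to note is that the paper itself offers \emph{no} proof of Proposition \ref{prop basicNcfprop} --- it presents these five identities as classical facts about continuants and immediately builds on them. Your matrix route (encoding $[a_1,\dots,a_n]$ by the product of the symmetric factors $\left(\begin{smallmatrix}a_i&1\\1&0\end{smallmatrix}\right)$) is the standard textbook way to establish them, and it has the virtue of making the reversal identity (\ref{eqnreversal}) and hence (\ref{eqn 112front}) transparent via transposition, whereas a purely recursive derivation of (\ref{eqnreversal}) from (\ref{eqnnumcutfromhead})--(\ref{eqnnumcutfromtail}) takes a genuine induction. You have also correctly isolated the only delicate points: the base-case conventions ($N[\,]=1$, the ``length $-1$'' entry equal to $0$) and the determinant argument $\det M=(-1)^n$ identifying the $(1,1)$-entry with the \emph{reduced} numerator --- which matters here because the paper allows entries $a_i=0$, so one cannot just say the fraction is automatically in lowest terms. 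One could add that the same zero-entry allowance means some intermediate tails of $[a_1,\dots,a_n]$ may be awkward to evaluate literally, which is precisely why the matrix/continuant formulation (where everything is polynomial in the $a_i$) is the right setting; the paper implicitly works in this formal setting too, e.g.\ when it extends $N$ to sequences ending in $0,0$. No gap to report.
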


We call $a_n,\dots,a_1$ the reversal of $a_1,\dots, a_n$. We would now like to extend the  definition of the numerator of a continued fraction. First, we let $N[\hspace{5pt}] =1$, so that the recursion equations (\ref{eqnnumcutfromhead}) and (\ref{eqnnumcutfromtail}) still hold when the continued fraction has only two entries.  
\section{A Theorem on continued fractions} \label{sect continued frac}

 In this section, we prove a result on continued fractions in general. In Section \ref{sect conjectures}, we will apply it to the continued fractions related to Markov numbers.


Soon it will become tedious and unnecessary to list every entry of a continued fraction. Thus we introduce the following notation.

\begin{definition}
	Let $\mu = a_1,\dots, a_n$ be a sequence of positive integers 
	Then we define the following notation, $N[\mu] = N[a_1,\dots, a_n]$.
	
	$$\begin{array}{cclccclccccccc}
	N[\mu^-]& =& N[a_1,\dots,a_{n-1}]  &\hspace{10pt}& N[^-\mu] &=& N[a_2,\dots, a_n]     &\hspace{10pt}&  \text{for }n>1\\
	N[\mu^-] &=& N[\hspace{5pt}] =1  &\hspace{10pt}& N[^-\mu] &=&N[\hspace{5pt}] =1 &\hspace{10pt}&\text{for }n=1
	\end{array}$$
	
	$$\begin{array}{cclccclccccccc}
	N[^-\mu^-]& =& N[a_2,\dots,a_{n-1}]     &\hspace{10pt}&  \text{for }n>2\\
	N[^-\mu^-] &=& N[\hspace{5pt}] =1  &\hspace{10pt}& \text{for }n=2
	\end{array}$$
	
\end{definition}

\begin{definition}
	A \emph{replacement} is an operation on the entries of a continued fraction such that a either $1,1$ is replaced with $2$ or $2$ is replaced with 1,1.
	$$N[ \mu_1,1,1,\mu_2]  \leftrightarrow N[ \mu_1,2,\mu_2] $$
\end{definition}

%
%

	\begin{thm}\label{lem replacedifference} 	
Let each $\mu_i$ be a sequence of positive integers  
Then
	\begin{equation} \label{eqn replacedifference}
	N[\mu_1,1,1,\mu_2] - N[\mu_1,2,\mu_2] = N[\mu_1^-]N[^-\mu_2]
	\end{equation}
\end{thm}

	
\begin{proof}
	
%

We will prove the statement by induction on the number of entries before the $1,1$ in the first continued fraction.  For our base case we let $\mu_1 =a_1$. We apply Equation (\ref{eqnnumcutfromhead}) to both continuant polynomials in the expression on the left hand side of Equation (\ref{eqn replacedifference}).\\
		\begin{eqnarray*}
	N[a_1,1,1,\mu_2] - N[a_1,2,\mu_2]
		&=& a_1N[1,1,\mu_2] +N[1,\mu_2] -(a_1N[2,\mu_2]+N[\mu_2]) \\
		&=&a_1(N[1,1,\mu_2] -N[2,\mu_2]) +N[1,\mu_2]- N[\mu_2]\\
			\end{eqnarray*}
		Since $N[1,1,\mu_2] =N[2,\mu_2]$, the first term is zero. We can decompose the second term using Equation (\ref{eqnnumcutfromhead}), then combine like terms.
			\begin{eqnarray*}
	N[a_1,1,1,\mu_2] - N[a_1,2,\mu_2]	&=& N[1,\mu_2]  - N[\mu_2] \\
		&=& 1N[\mu_2] + N[^-\mu_2] - N[\mu_2] \\
		&=& N[^-\mu_2] 
	\end{eqnarray*}

Since $\mu_1 = a_1$, we have that $N[\mu_1^-] = N[\hspace{5pt}]
= 1$ 
and therefore the right side of Equation (\ref{eqn replacedifference}) is  $N[\mu_1^-]N[^-\mu_2] = N[\hspace{5pt}]N[^-\mu_2]=N[^-\mu_2] $. Therefore the statement holds in the base case. \\

Next, let $\mu_1$ have $n>1$ entries and assume Equation (\ref{eqn replacedifference}) holds for any $\mu_1$ with $n$ or less entries. We would like to prove that 
	$N[a_0,\mu_1,1,1,\mu_2] - N[a_0,\mu_1,2,\mu_2] = N[a_0,\mu_1^-]  N[^-\mu_2]$. We apply Equation (\ref{eqnnumcutfromhead}) and then regroup the expression. \\
	
	$N[a_0,\mu_1,1,1,\mu_2] - N[a_0,\mu_1,2,\mu_2]$
	\begin{eqnarray*}
		&=& a_0N[\mu_1,1,1,\mu_2] + N[^-\mu_1,1,1,\mu_2]-(a_0N[\mu_1,2,\mu_2]+N[^-\mu_1,2,\mu_2] )\\
		&=& a_0\left(N[\mu_1,1,1,\mu_2] -N[\mu_1,2,\mu_2]\right) + N[^-\mu_1,1,1,\mu_2] - N[^-\mu_1,2,\mu_2]\\
	\end{eqnarray*}
	Applying our induction hypothesis to each difference, we see that this expression is equal to
	\begin{eqnarray*}
	&& a_0N[\mu_1^-] N[^-\mu_2] + N[^-\mu_1^-] N[^-\mu_2]\\
		&=& (a_0N[\mu_1^-]  +N[^-\mu_1^-])N[^-\mu_2] \\
		&=&N[a_0,\mu_1^-]  N[^-\mu_2],\\
	\end{eqnarray*}
where the last identity holds by Equation (\ref{eqnnumcutfromhead}). Therefore Theorem \ref{lem replacedifference} is proved by induction. 
\end{proof}

\section{Markov numbers} \label{sect Markov}
In this section we study the continued fractions associated to Markov numbers. We start by  recalling the definition of Markov snake graphs and their continued fractions.

\subsection{Markov snake graphs }\label{sect markovsnake}
In this subsection, we give the background necessary to understand the relationship between Markov numbers and snake graphs. We often refer to work done in the field of cluster algebras because Markov triples are related to the clusters of the cluster algebra of a torus with one puncture \cite{BBH, P}. More specifically, Markov snake graphs, which we will define below, correspond to the cluster variables of a cluster algebra from a once punctured torus.

Let $p$ and $q$ be relatively prime integers with $p<q$. First, we define the $(q,p)$-rectangle to be the rectangle formed in $\RR^2$ with the origin and $(q,p)$ as vertices. We call the diagonal through these vertices,  $\ell_{p/q}$, because it is a line segment with slope $p/q$. The unique lattice path $L_{p/q}$ in $\ZZ \times \ZZ$ from the origin to $(q,p)$ lying strictly below $\ell_{p/q}$ and with no lattice points strictly between the path $L_{p/q}$ and the diagonal $\ell_{p/q}$ is called the Christoffel lattice path. 
For example, the Christoffel path $L_{3/5} $ is shown in Figure~\ref{ex markovsnake}.
 We construct a Markov snake graph, $\calg_{p/q}$, from the Christoffel lattice path $L_{p/q}$.

Simply put, snake graphs in general are graphs consisting of square tiles where each tile is placed either to the right or above the previous tile. General snake graphs were introduced in \cite{MSW} in order to give a combinatorial formula for cluster variables in terms of perfect matchings.  These graphs were further studied in \cite{CS,CS2,CS3,CS4,CS5,R}. The special case of Markov snake graphs already appeared in \cite{P}. The following definition is from \cite{CS5}.
\begin{definition}
	The Markov snake graph, $\calg_{p/q}$, is the snake graph with half unit length tiles, lying on the Christoffel lattice path $L_{p/q}$ such that the south west vertex of the first tile is $(0.5,0)$ and the north east vertex of the last tile is $(q, p-0.5)$.
\end{definition}

 For an example, see Figure~\ref{ex markovsnake} for the construction of the Markov snake graph $\calg_{3/5}$.  Once the Markov snake graph is constructed, we consider the number of perfect matchings it has. A perfect matching is a collection of edges in a graph such that each vertex on the graph is adjacent to exactly one edge in the collection. The following is a reformulation from \cite{CS5} of a result due to \cite{P}.\\

\begin{figure}
	\begin{center}
		\begin{minipage}{2.9in}
			\begin{center}
				\begin{tikzpicture}[scale=1.1]
				\draw[ultra thick,red, -] (0,0) -- (1,0);
				\draw[ultra thick,red, -] (1,0) -- (2,0);
				\draw[semithick, -] (2,0) -- (3,0);
				\draw[semithick, -] (3,0) -- (4,0);
				\draw[semithick, -] (4,0) -- (5,0);
				\draw[semithick, -] (0,1) -- (1,1);
				\draw[semithick, -] (1,1) -- (2,1);
				\draw[ultra thick,red, -] (2,1) -- (3,1);
				\draw[ultra thick,red, -] (3,1) -- (4,1);
				\draw[semithick, -] (4,1) -- (5,1);
				\draw[semithick, -] (0,2) -- (1,2);
				\draw[semithick, -] (1,2) -- (2,2);
				\draw[semithick, -] (2,2) -- (3,2);
				\draw[semithick, -] (3,2) -- (4,2);
				\draw[ultra thick,red, -] (4,2) -- (5,2);
				\draw[semithick, -] (0,3) -- (1,3);
				\draw[semithick, -] (1,3) -- (2,3);
				\draw[semithick, -] (2,3) -- (3,3);
				\draw[semithick, -] (3,3) -- (4,3);
				\draw[semithick, -] (4,3) -- (5,3);
				\draw[semithick, -] (0,0) -- (0,1);
				\draw[semithick, -] (0,1) -- (0,2);
				\draw[semithick, -] (0,2) -- (0,3);
				\draw[semithick, -] (1,0) -- (1,1);
				\draw[semithick, -] (1,1) -- (1,2);
				\draw[semithick, -] (1,2) -- (1,3);
				\draw[ultra thick,red, -] (2,0) -- (2,1);
				\draw[semithick, -] (2,1) -- (2,2);
				\draw[semithick, -] (2,2) -- (2,3);
				\draw[semithick, -] (3,0) -- (3,1);
				\draw[semithick, -] (3,1) -- (3,2);
				\draw[semithick, -] (3,2) -- (3,3);
				\draw[semithick, -] (4,0) -- (4,1);
				\draw[ultra thick,red, -] (4,1) -- (4,2);
				\draw[semithick, -] (4,2) -- (4,3);
				\draw[semithick, -] (5,0) -- (5,1);
				\draw[semithick, -] (5,1) -- (5,2);
				\draw[ultra thick,red, -] (5,2) -- (5,3);
				\draw[semithick, -] (0,0) -- (5,3);
				\end{tikzpicture}
			\end{center}
		\end{minipage}
		\begin{minipage}{2.9in}
			\begin{center}
				\begin{tikzpicture}[scale=1.1]
				\draw[very thick,red, -] (0,0) -- (1,0);
				\draw[very thick,red, -] (1,0) -- (2,0);
				\draw[semithick, -] (2,0) -- (3,0);
				\draw[semithick, -] (3,0) -- (4,0);
				\draw[semithick, -] (4,0) -- (5,0);
				\draw[semithick, -] (0,1) -- (1,1);
				\draw[semithick, -] (1,1) -- (2,1);
				\draw[ultra thick,red, -] (2,1) -- (3,1);
				\draw[ultra thick,red, -] (3,1) -- (4,1);
				\draw[semithick, -] (4,1) -- (5,1);
				\draw[semithick, -] (0,2) -- (1,2);
				\draw[semithick, -] (1,2) -- (2,2);
				\draw[semithick, -] (2,2) -- (3,2);
				\draw[semithick, -] (3,2) -- (4,2);
				\draw[ultra thick,red, -] (4,2) -- (5,2);
				\draw[semithick, -] (0,3) -- (1,3);
				\draw[semithick, -] (1,3) -- (2,3);
				\draw[semithick, -] (2,3) -- (3,3);
				\draw[semithick, -] (3,3) -- (4,3);
				\draw[semithick, -] (4,3) -- (5,3);
				\draw[semithick, -] (0,0) -- (0,1);
				\draw[semithick, -] (0,1) -- (0,2);
				\draw[semithick, -] (0,2) -- (0,3);
				\draw[semithick, -] (1,0) -- (1,1);
				\draw[semithick, -] (1,1) -- (1,2);
				\draw[semithick, -] (1,2) -- (1,3);
				\draw[ultra thick,red, -] (2,0) -- (2,1);
				\draw[semithick, -] (2,1) -- (2,2);
				\draw[semithick, -] (2,2) -- (2,3);
				\draw[semithick, -] (3,0) -- (3,1);
				\draw[semithick, -] (3,1) -- (3,2);
				\draw[semithick, -] (3,2) -- (3,3);
				\draw[semithick, -] (4,0) -- (4,1);
				\draw[ultra thick,red, -] (4,1) -- (4,2);
				\draw[semithick, -] (4,2) -- (4,3);
				\draw[semithick, -] (5,0) -- (5,1);
				\draw[semithick, -] (5,1) -- (5,2);
				\draw[ultra thick,red, -] (5,2) -- (5,3);
				\draw[semithick,blue, -] (.5,0.02) -- (2,0.02);
				\draw[semithick,blue, -] (.5,.52) -- (2,.52);
				\draw[semithick,blue, -] (1.5,1.02) -- (4,1.02);
				\draw[semithick,blue, -] (1.5,1.52) -- (4,1.52);
				\draw[semithick,blue, -] (3.5,2.02) -- (5,2.02);
				\draw[semithick,blue, -] (3.5,2.52) -- (5,2.52);
				\draw[semithick,blue, -] (.5,0) -- (.5,.5);
				\draw[semithick,blue, -] (1,0) -- (1,.5);
				\draw[semithick,blue, -] (1.5,0) -- (1.5,1.5);
				\draw[semithick,blue, -] (1.98,0) -- (1.98,1);
				\draw[semithick,blue, -] (2,1) -- (2,1.5);
				\draw[semithick,blue, -] (2.5,1) -- (2.5,1.5);
				\draw[semithick,blue, -] (3,1) -- (3,1.5);
				\draw[semithick,blue, -] (3.5,1) -- (3.5,2.5);
				\draw[semithick,blue, -] (3.98,1) -- (3.98,2);
				\draw[semithick,blue, -] (4,2) -- (4,2.5);
				\draw[semithick,blue, -] (4.5,2) -- (4.5,2.5);
				\draw[semithick,blue, -] (4.98,2) -- (4.98,2.5);
				\end{tikzpicture}
			\end{center}
		\end{minipage}
	\end{center}  
	\captionof{figure}{On the left we have the line $\ell_{3/5}$ with the unique Christoffel lattice path $L_{3/5}$ marked in red. On the right we have the Markov snake graph $\calg_{3/5}$ in blue lying on the Christoffel lattice path. The continued fraction $\cf_{p/q}$ associated to $\calg_{3/5}$ is $[2,2,2,1,1,2,2,2]$, which means that $\calg_{3/5}$ has $N[2,2,2,1,1,2,2,2] = 433$ perfect matchings. \\}\label{ex markovsnake}
\end{figure}

\begin{thm}
	Let $q,p$ be relatively prime positive integers. The number of perfect matchings of the Markov snake graph, $\calg_{p/q}$, in the $(q,p)$-rectangle is the Markov number $m_{p/q}$.  \label{thm pm= Markov}
\end{thm}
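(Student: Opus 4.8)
The plan is to realize the Markov numbers inside the cluster algebra of the once-punctured torus and then apply the snake-graph expansion formula of \cite{MSW}. Fix a triangulation $T=\{\tau_1,\tau_2,\tau_3\}$ of the once-punctured torus cutting it into two triangles; its associated quiver is the Markov quiver. By \cite{BBH,P}, the exchange graph of the corresponding cluster algebra $\mathcal{A}$ (taken with trivial coefficients) is the trivalent Markov tree, and the specialization $x_{\tau_1}=x_{\tau_2}=x_{\tau_3}=1$ carries the cluster variables to the entries of the Markov triples: the skein relations of the surface degenerate precisely to the Markov mutation $(x,y,z)\mapsto(x,\,3xy-z,\,y)$. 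In particular, writing $\gamma_{p/q}$ for the arc in position $p/q$ of the tree (equivalently, the arc of slope $p/q$ on the torus viewed as $\RR^2/\ZZ^2$), one gets $x_{\gamma_{p/q}}\mapsto m_{p/q}$; I would verify this by induction on the Markov tree, matching each mutation step with its exchange relation. (Alternatively, one can avoid mentioning Markov triples and instead set up the induction directly from the surface skein relations among the $x_{\gamma_{p/q}}$, which after specialization become the Markov recursion.)

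Second, I would invoke the expansion formula of \cite{MSW}: for any arc $\gamma$ on the once-punctured torus, $x_\gamma=\cross(\gamma,T)^{-1}\sum_{P}x(P)$, where $P$ ranges over the perfect matchings of the labelled snake graph $\calg_\gamma$ determined by the sequence of arcs of $T$ that $\gamma$ crosses. One must use the variant of the construction that accounts for self-folded triangles and for loop arcs around the puncture (as in \cite{MSW}); alternatively, lifting to a suitable cover that separates the puncture reduces matters to the standard snake-graph construction. After the specialization of the first step, each weight $x(P)$ becomes $1$ and the normalizing monomial $\cross(\gamma,T)$, being a product of the $x_{\tau_i}$, also becomes $1$; hence $x_{\gamma_{p/q}}$ is sent to the number of perfect matchings of $\calg_{\gamma_{p/q}}$. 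Combined with the first step, this gives that the number of perfect matchings of $\calg_{\gamma_{p/q}}$ equals $m_{p/q}$.

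It remains --- and this is the main obstacle --- to identify the abstract snake graph $\calg_{\gamma_{p/q}}$ with the Markov snake graph $\calg_{p/q}$ of the statement, i.e.\ the snake graph of half-unit tiles lying on the Christoffel lattice path $L_{p/q}$ with south-west vertex of the first tile at $(0.5,0)$ and north-east vertex of the last at $(q,p-0.5)$. Here I would use the description from \cite{CS5} of $\gamma_{p/q}$ as the straight line segment of slope $p/q$: its successive intersections with the two families of parallel lifts of the arcs of $T$ are recorded step by step by the unit horizontal and vertical moves of $L_{p/q}$, so that the corners of $L_{p/q}$ become exactly the turns of the snake graph --- a tile is placed to the right of, or above, its predecessor according to whether two consecutively crossed arcs belong to the same family or to different families. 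The normalization of the endpoints is then a short bookkeeping check with the half-unit scaling. This geometric step is where the once-punctured-torus picture must be tied tightly to the elementary, number-theoretic definition of $\calg_{p/q}$ via the Christoffel path, and where care with the puncture and with orientation conventions is essential; everything else is a formal assembly of \cite{MSW}, \cite{BBH}, and \cite{P}.
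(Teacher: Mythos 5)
Your outline assembles exactly the chain of results the paper itself relies on: the paper offers no proof of this theorem, presenting it as ``a reformulation from \cite{CS5} of a result due to \cite{P}'' and describing in its introduction precisely the route you take (the once-punctured torus cluster algebra via \cite{BBH,P}, the perfect-matching expansion formula of \cite{MSW} specialized at $1$, and the identification of the arc's snake graph with the Christoffel-path construction of \cite{CS5}). Your proposal is a correct and appropriately detailed version of that same argument, including the right flag that the matching of the abstract snake graph with the half-unit-tile Christoffel-path graph is the step requiring genuine verification.
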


According to Theorem \ref{thm pm= Markov}, 
 when considering Conjecture \ref{constant num conjecture}, we can instead analyze the number of perfect matchings of the corresponding Markov snake graphs.
 
 When $p$ and $q$ are not relatively prime, we can still associate a numerical value to $m_{p/q}$ in a somewhat analogous manner. We construct a unique lattice path $L_{p/q}$ in the $(q,p)$-rectangle from the origin to $(q,p)$ such that $L_{p/q}$ lies below or on the line segment $\ell_{p/q}$ from the origin to $(q,p)$ and no lattice points lie strictly between $L_{p/q}$ and $\ell_{p/q}$. Then we construct a snake graph on this lattice path in the same manner as before. We call this a lattice path snake graph, rather than the more specific Markov snake graph. We let the number of perfect matchings of this lattice path snake graph be $m_{p/q}$. 

\subsection{Continued fractions of Markov snake graphs}
Every snake graph has a corresponding continued fraction. Moreover, the numerator of that continued fraction is the number of perfect matchings of its associated snake graph. This relation to continued fractions was found in \cite{CS4} and applications were given in \cite{CS5, CLS, LS,R}. Therefore by Theorem~\ref{thm pm= Markov}, the numerator of the continued fraction associated to a Markov snake graph is that Markov number. Thus we will study Markov numbers by analyzing the numerators of continued fractions, or, the continuant polynomials.


 Regardless of whether $p$ and $q$ are relatively prime, we study $m_{p/q}$ by considering its associated continued fraction which we denote by $\cf_{p/q}$. The continued fraction of a snake graph is determined by the snake graph's sign function as in \cite{CS4}. For lattice path snake graphs, including Markov snake graphs, we can determine the entries in the continued fraction by the following process. Shade the first and last tiles in the snake graph, then shade any corner tiles. The entries in the continued fraction can then be read off the snake graph. Any shaded tile represents an entry 2 and each interior edge strictly between shaded tiles represents an entry 1. See Example \ref{ex shadedtiles}.
   \begin{example}\label{ex shadedtiles} Here we have the Markov snake graph associated to the $(5,3)$-rectangle. The continued fraction is $\cf_{3/5}=[2,2,2,1,1,2,2,2]$, which has numerator equal to $m_{3/5} = 433$.
 	\begin{center}
 		\begin{tikzpicture}[scale=1.1]
 		\draw[semithick,blue, -] (.5,0.02) -- (2,0.02);
 		\draw[semithick,blue, -] (.5,.52) -- (2,.52);
 		\draw[semithick,blue, -] (1.5,1.02) -- (4,1.02);
 		\draw[semithick,blue, -] (1.5,1.52) -- (4,1.52);
 		\draw[semithick,blue, -] (3.5,2.02) -- (5,2.02);
 		\draw[semithick,blue, -] (3.5,2.52) -- (5,2.52);
 		\draw[semithick,blue, -] (.5,0) -- (.5,.5);
 		\draw[semithick,blue, -] (1,0) -- (1,.5);
 		\draw[semithick,blue, -] (1.5,0) -- (1.5,1.5);
 		\draw[semithick,blue, -] (1.98,0) -- (1.98,1);
 		\draw[semithick,blue, -] (2,1) -- (2,1.5);
 		\draw[semithick,blue, -] (2.5,1) -- (2.5,1.5);
 		\draw[semithick,blue, -] (3,1) -- (3,1.5);
 		\draw[semithick,blue, -] (3.5,1) -- (3.5,2.5);
 		\draw[semithick,blue, -] (3.98,1) -- (3.98,2);
 		\draw[semithick,blue, -] (4,2) -- (4,2.5);
 		\draw[semithick,blue, -] (4.5,2) -- (4.5,2.5);
 		\draw[semithick,blue, -] (4.98,2) -- (4.98,2.5);
 		\fill [blue] (0.5,0) rectangle (1,0.5);
 		\fill [blue] (1.5,0) rectangle (2,0.5);
 		\fill [blue] (1.5,1) rectangle (2,1.5);
 		\fill [blue] (3.5,1) rectangle (4,1.5);
 		\fill [blue] (3.5,2) rectangle (4,2.5);
 		\fill [blue] (4.5,2) rectangle (5,2.5);
 		\node at (0.7,-0.3) {$2$};
 		\node at (1.7,-0.3) {$2$};
 		\node at (2.2,.7) {$2$};
 		\node at (2.6,0.7) {$1$};
 		\node at (3.1,0.7) {$1$};
 		\node at (3.7,0.7) {$2$};
 		\node at (4.2,1.7) {$2$};
 		\node at (4.7,1.7) {$2$};
 		\end{tikzpicture}
 	\end{center}
 \end{example}

\begin{remark}
\label{rem 1}  
 The continued fraction, $[a_1, \dots, a_n]$ has entries $a_i \in \{1,2\}$ such that the sum of the entries is $\sum_{i=1} ^n a_i = 2q+2p-2$ . Moreover,  
 $a_1=a_n=2$, $n$ is even, and $a_i=a_{i+1}$ whenever $i$ is even and $2\le i\le n-2$. 
 \end{remark}
 
  While we are going to consider each 2 separately,  we would like  to refer to the pairs of 1's as a single entry.

\begin{definition} Let $[a_1,a_2,\dots,a_n]$ be the continued fraction associated to a lattice path snake graph. 
	The sequence $a_1,a_2,...,a_n$ can be decomposed into the subsequence $\nu_1,...,\nu_m$ where each $\nu_i =1,1$ or $\nu_i =2$ such that we have an identity of sequences  $a_1,a_2,...,a_n = \nu_1,\nu_2,...,\nu_m$. Then each $\nu_i$ is called a \emph{replaceable entry}. 
\end{definition}

\begin{example}\label{m(7,3)andm(7,4)}
	In this example, we observe that $m_{p/q} < m_{p/(q+1)}$ when $q= 7$ and $p = 3$.  In the continuant polynomial below, we have highlighted the different replacements. Notice that the $(q+1,p)$-Markov continuant polynomial has one more replaceable entry than the $(q,p)$-Markov continuant polynomial. \\ 
	
$	\begin{array}{rccccccccccl}
	
	m_{p/(q+1)}=m_{3/8}=N[&2, 1,1,2,2,1,1,&\color{red} 1,1,&2,& \color{cadmiumgreen}2,&\color{red}1,1,&2]& = 7,561
\\
m_{p/q} = m_{3/7} = N[ & 2, 1,1,  2,2,1,1, &\color{red} 2,&2, &  \color{cadmiumgreen}1,1,& \color{red}2&]& =2,897\\	\end{array}$\\
\end{example}


It will be of interest for us to know the number of replaceable entries in the continued fraction associated to $m_{p/q}$. The sum of the entries in the continued fraction is the sum of twice the number of pairs of 1's with twice the number of 2's that appear. Hence, we have $2q+2p-2 = 2( \# \text{ of pairs of 1's}) + 2(2p)$, which implies that the number of replaceable entries in the continued fraction associated to $m_{p/q}$  is $q+p-1$. In Example~\ref{ex twosnakes} we compare the two Markov snake graphs for $m_{15/22}$ and $m_{15/23}$ to see how the associated continued fractions differ by replacements.

\begin{example} \label{ex twosnakes} In this example, the Markov snake graph $\calg_{15/23}$ is shown in blue on the same graph as $\calg_{15/22}$ in red, with their overlap in purple. The black shaded tiles represent the tiles for which a replacement in the corresponding continued fraction occurs. $m_{15/23} =187,611,224,490,881$ and $m_{15/22}=73,224,462,646,361$\\
	
	\noindent
	\scalebox{.75}{
		$\arraycolsep=1pt
		\begin{array}{rcccccccccccccccccccccccccll} 
		\color{red}	m_{15/22}=N
[\color{red}2,2,2,&2,&\color{red}2,&1,1,&\color{red}2,2,&2,&\color{red}2,&1,1,&\color{red}2,2,&2,&\color{red}2,&1,1,&\color{red}2,2,&2,&\color{red}2,&1,1,&\color{red}2,2,&2,&\color{red}2,&1,1,&\color{red}2,2,&2,&\color{red}2,&1,1,&\color{red}2,2,&2,&\color{red}2,2]\\
\color{blue}	m_{15/23}=N[ 2,2,2,&1,1,&\color{blue}2,&2,&\color{blue}2,2,&1,1,&\color{blue}2,&2,&\color{blue}2,2,&1,1,&\color{blue}2,&2,&\color{blue}2,2,&1,1,&\color{blue}2,&2,&\color{blue}2,2,&1,1,&\color{blue}2,&2,&\color{blue}2,2,&1,1,&\color{blue}2,&2,&\color{blue}2,2,&1,1,&\color{blue}2,2,2]\\
		\end{array}$} 
	\begin{center}
	\begin{tikzpicture}[scale=.53]
	\node at (.3,-0.5) {$(0,0)$};
	\node [blue] at (24.3,14.8) {$(23,15)$};
	\node [red] at (23,15.8) {$(22,15)$};
	\draw[semithick, -](0,0) rectangle (23,15);
	\draw[step=1.0,black,thin] (0,0) grid (23,15);
	\draw[semithick,blue, -](0,0)-- (23,15);
	\draw[semithick,red, -](0,0)-- (22,15);
	\fill [blue,opacity=0.5]  (0.5,0) rectangle (2,.5);
	\fill [blue,opacity=0.5]  (1.5,.5) rectangle (2,1.5);
	\fill [blue,opacity=0.5]  (2,1) rectangle (4,1.5);
	\fill [blue,opacity=0.5]  (3.5,1.5) rectangle (4,2.5);
	\fill [blue,opacity=0.5]  (4,2) rectangle (5,2.5);
	\fill [blue,opacity=0.5] (4.5,2.5) rectangle (5,3.5);
	\fill [blue,opacity=0.5]  (5,3) rectangle (7,3.5);
	\fill [blue,opacity=0.5]  (6.5,3.5) rectangle (7,4.5);
	\fill [blue,opacity=0.5] (7,4) rectangle (8,4.5);
	\fill [blue,opacity=0.5] (7.5,4.5) rectangle (8,5.5);
	\fill [blue,opacity=0.5]  (8,5) rectangle (10,5.5);
	\fill [blue,opacity=0.5] (9.5,5.5) rectangle (10,6.5);
	\fill [blue,opacity=0.5] (10,6) rectangle (11,6.5);
	\fill [blue,opacity=0.5] (10.5,6.5) rectangle (11,7.5);
	\fill [blue,opacity=0.5]  (11,7) rectangle (13,7.5);
	\fill [blue,opacity=0.5]  (12.5,7.5) rectangle (13,8.5);
	\fill [blue,opacity=0.5] (13,8) rectangle (14,8.5);
	\fill [blue,opacity=0.5]  (13.5,8.5) rectangle (14,9.5);
	\fill [blue,opacity=0.5]  (14,9) rectangle (16,9.5);
	\fill [blue,opacity=0.5]  (15.5,9.5) rectangle (16,10.5);
	\fill [blue,opacity=0.5]  (16,10) rectangle (17,10.5);
	\fill [blue,opacity=0.5]  (16.5,10.5) rectangle (17,11.5);
	\fill [blue,opacity=0.5]  (17,11) rectangle (19,11.5);
	\fill [blue,opacity=0.5]  (18.5,11.5) rectangle (19,12.5);
	\fill [blue,opacity=0.5]  (19,12) rectangle (20,12.5);
	\fill [blue,opacity=0.5] (19.5,12.5) rectangle (20,13.5);
	\fill [blue,opacity=0.5]  (20,13) rectangle (22,13.5);
	\fill [blue,opacity=0.5]  (21.5,13.5) rectangle (22,14.5);
	\fill [blue,opacity=0.5] (22,14) rectangle (23,14.5);
	\fill [red,opacity=0.5] (0.5,0) rectangle (2,.5);
	\fill [red,opacity=0.5] (1.5,.5) rectangle (2,1.5);
	\fill [red,opacity=0.5] (2,1) rectangle (3,1.5);
	\fill [red,opacity=0.5] (2.5,1.5) rectangle (3,2.5);
	\fill [red,opacity=0.5] (3,2) rectangle (5,2.5);
	\fill [red,opacity=0.5] (4.5,2.5) rectangle (5,3.5);
	\fill [red,opacity=0.5] (5,3) rectangle (6,3.5);
	\fill [red,opacity=0.5] (5.5,3.5) rectangle (6,4.5);
	\fill [red,opacity=0.5] (6,4) rectangle (8,4.5);
	\fill [red,opacity=0.5] (7.5,4.5) rectangle (8,5.5);
	\fill [red,opacity=0.5] (8,5) rectangle (9,5.5);
	\fill [red,opacity=0.5] (8.5,5.5) rectangle (9,6.5);
	\fill [red,opacity=0.5] (9,6) rectangle (11,6.5);
	\fill [red,opacity=0.5] (10.5,6.5) rectangle (11,7.5);
	\fill [red,opacity=0.5] (11,7) rectangle (12,7.5);
	\fill [red,opacity=0.5] (11.5,7.5) rectangle (12,8.5);
	\fill [red,opacity=0.5] (12,8) rectangle (14,8.5);
	\fill [red,opacity=0.5] (13.5,8.5) rectangle (14,9.5);
	\fill [red,opacity=0.5] (14,9) rectangle (15,9.5);
	\fill [red,opacity=0.5] (14.5,9.5) rectangle (15,10.5);
	\fill [red,opacity=0.5] (15,10) rectangle (17,10.5);
	\fill [red,opacity=0.5] (16.5,10.5) rectangle (17,11.5);
	\fill [red,opacity=0.5] (17,11) rectangle (18,11.5);
	\fill [red,opacity=0.5] (17.5,11.5) rectangle (18,12.5);
	\fill [red,opacity=0.5] (18,12) rectangle (20,12.5);
	\fill [red,opacity=0.5] (19.5,12.5) rectangle (20,13.5);
	\fill [red,opacity=0.5] (20,13) rectangle (21,13.5);
	\fill [red,opacity=0.5] (20.5,13.5) rectangle (21,14.5);
	\fill [red,opacity=0.5] (21,14) rectangle (22,14.5);
	\fill (2.5,1) rectangle (3, 1.5);
	\fill (3.5,2) rectangle (4, 2.5);
	\fill (5.5,3) rectangle (6, 3.5);
	\fill (6.5,4) rectangle (7, 4.5);
	\fill (8.5,5) rectangle (9, 5.5);
	\fill (9.5,6) rectangle (10, 6.5);
	\fill (11.5,7) rectangle (12, 7.5);
	\fill (12.5,8) rectangle (13, 8.5);
	\fill (14.5,9) rectangle (15, 9.5);	
	\fill (15.5,10) rectangle (16, 10.5);	
	\fill (17.5,11) rectangle (18, 11.5);
	\fill (18.5,12) rectangle (19, 12.5);
	\fill (20.5,13) rectangle (21, 13.5);
	\end{tikzpicture}
	\end{center}
\end{example}

\subsection{A theorem on the continued fraction of a Markov snake graph}
	
Let $m_{p/q}$ be the Markov number of slope $p/q<1$ and denote by $\calg_{p/q}$ the corresponding Markov snake graph and by $\cf_{p/q}$ its continued fraction. Thus $m_{p/q}$ is the numerator of $\cf_{p/q}$. Recall that $\cf_{p/q}$ has even length and that its first and its last entry is a 2.

\begin{thm} \label{thm 01}  
	Let $\cf_{p/q}=[\mu, 2, \delta, 1,1,\nu]$, where $\mu,\delta $ and $\nu$ all have odd length and $\delta $ is a sequence of $2$s. Let $\tilde{\mu}$ be the sequence $\mu$ in reverse order.
 Then we have the following inequality of continued fractions
  \[[\tilde{\mu}] <[\nu].\]
	\end{thm}

\begin{proof}
Let $\mu=a_\ell,a_{\ell-1},\ldots, a_2,a_1$ so $\tilde {\mu}=a_1,a_2,\ldots,a_\ell$. By  Remark \ref{rem 1}, we have  $a_\ell=2$, $a_{\ell-1}=a_{\ell-2},\ldots,a_4=a_3, a_2=a_1$.
Thus 
\begin{equation}\label{eq 1} a_i=a_{i+1}, \textup{ for all odd $i$, $i=1,3,\ldots,\ell-2$}.\end{equation}

 Assume that $\tilde{\mu}$ and $\nu$ agree 
up to but not including position $k$. In other words, the first difference in the sequence occurs at position $k$.\\
	
{\bf Case A:} $\tilde{\mu} = a_1,  a_2 \dots, a_{k-1},a_k,\dots, a_{\ell} \text{ and } \nu = a_1, \dots, a_{k-1}$.\vspace{10pt}\\
Then $a_{k-1}$ is the last entry of $\cf_{p/q}$ and hence $a_{k-1}= 2$. Moreover, $k-1$ is odd, since $\nu$ has odd length, and thus $a_k=2$ by (\ref{eq 1}).
We shall show that this case is impossible. Consider the Markov snake graph $\calg_{p/q}$ corresponding to the continued fraction $[\mu,2, \delta,1,1,\nu]$ in the $(q,p)$-rectangle, see Figure \ref{fig 1}. Denote the lattice point on the south-east vertex of the $a_k$ tile by $(x_1,y_1)$ and denote the lattice point on the south-east vertex of the last tile in $\delta$ by $(x_2,y_2)$. Then we let $(x_3,y_3)$ be the lattice point one unit directly above the south-east vertex of the $a_{k-1}$ tile in $\nu$.  
By construction, $(x_3,y_3)=(q,p)$. 
 Let $\calg(\mu)$ and $\calg(\nu)$ be the subgraphs corresponding to the segments $\mu$ and $\nu$, see Figure \ref{fig 1}.

\begin{figure}
\begin{center}
\scalebox{.8}{ 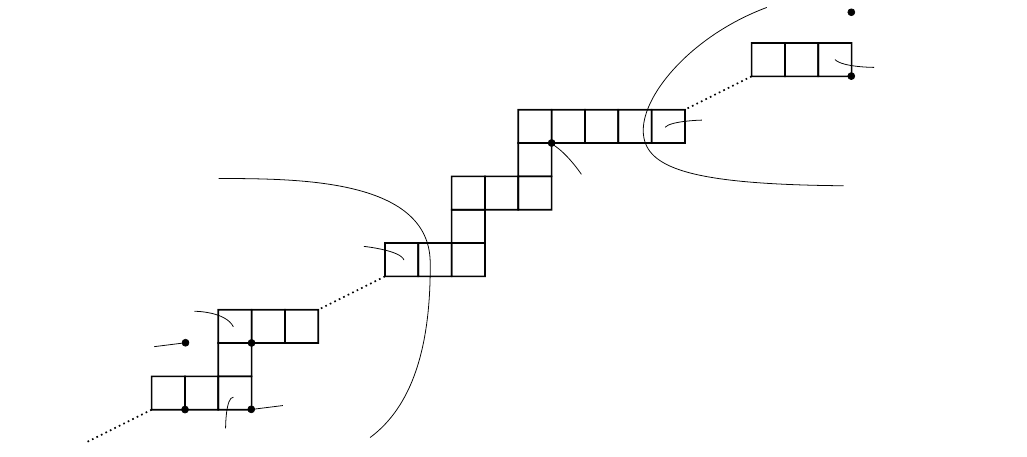}
 \caption{Proof of Theorem \ref{thm 01} Case A; $k$ is even. }\label{fig 1}
\end{center}
\end{figure}

To obtain a contradiction, we shall show that $(x_3,y_3)$ lies strictly below the line $\ell_{p/q}$. Equivalently, we shall show $\frac{y_3}{x_3} < \frac{p}{q}$. Since $(x_2,y_2)$ lies on the Christoffel lattice path, we know that $\frac{y_2}{x_2} < \frac{p}{q}$, hence
it suffices to show 
  $\frac{y_3}{x_3} <\frac{y_2}{x_2}$.

We introduce the following notation.
\[\begin{array}{rcl}
a&=& \frac{1}{2}\,\#\{i\in\{1,2,\ldots,k-2\}\mid a_i=2\}\\[5pt]
b&=& a+ \frac{1}{2}\,\#\{i\in\{1,2,\ldots,k-2\}\mid a_i=1\}\\[5pt]
c&=& \frac{1}{2}\,(\textup{number of twos between $\mu$ and $\nu$ in $\cf_{p/q}$}) = \frac{1}{2}(1+ \textup{(number of twos in $\delta$)}).
\end{array}\]
Thus $a$ is the number of pairs of twos in $\tilde{\mu}$ up to (not including) $a_{k-1}$,  $b$ is the sum of the number of pairs of ones and pairs of twos in $\tilde{\mu}$ up to (not including) $a_{k-1}$.
Then $x_2-x_1 = b+c $ and $y_2-y_1 = a+c+1 $, where the extra 1 accounts for the  vertical step at $a_{k-1}$. Similarly $x_3-x_1=2b+c+2$, where the extra 2 accounts for the  horizontal segment of length 3/2 between $(x_2,y_2)$  and $\calg(\nu)$ 
and the last tile labeled $a_{k-1}$ of width 1/2. Also 
 $y_3-y_1=2a+c+2$.
We thus have the following coordinates.
\begin{equation}\label{eq 2}(x_2,y_2) = ( x_1 + b+c, y_1 +a+c+1) \hspace{10pt} \text{and} \hspace{10pt} 
(x_3,y_3) = (x_1+ 2b+c+2, y_1+2a+c+2).\end{equation}

Therefore, in order to show that $\frac{y_3}{x_3} <\frac{y_2}{x_2}$, it suffices to show
\[(x_1 + b+c)(y_1+2a+c+2) < (x_1+ 2b+c+2)(y_1 +a+c+1).\]
Expanding and collecting like terms shows that this inequality is equivalent to 
\[\dfrac{a+1}{b+2} < \dfrac{y_1 +c}{x_1+c-2}.\]
%

Now, the point $(x_1 +c-2, y_1+c)= (x_1-1 +(c-1), y_1+1+(c-1))$ lies above the line $\ell_{p/q}$, because the point $(x_1-1,y_1+1)$ does, see Figure \ref{fig 1}. Therefore $\frac{p}{q} <\frac{y_1 +c}{x_1+c-2}$. Thus it suffices to show that $	\frac{a+1}{b+2} < \frac{p}{q}$.
Using the coordinates (\ref{eq 2}), we see that 
$$\frac{a+1}{b+2} 
= \frac{ y_2 -( y_1+c)}{x_2 - (x_1+c-2)}. $$
Therefore $\frac{a+1}{b+2} $ is the slope of the line segment from $(x_1 +c-2, y_1+c)$ to $(x_2,y_2)$, and since $(x_1 +c-2, y_1+c)$ lies above the line $\ell_{p/q}$ and to the left of $(x_2,y_2)$, and $(x_2,y_2)$ lies below the line $\ell_{p/q}$, we have $\frac{a+1}{b+2} < \frac{p}{q}$. This completes the proof in case A.\\

{\bf Case B:} $\tilde{\mu} = a_1, \dots, a_{k-1} \text{ and } \nu = a_1, \dots, a_{k-1}, b_k,\dots, b_{k+n}$ \vspace{10pt}\\
Since $\tilde{\mu}$ has odd length,  $k-1$ is odd. 
In this case, $[\tilde{\mu}]$ is the $k-1$ convergent of $[\nu]$. Therefore   $[\tilde{\mu}]< [\nu]$ because odd convergents of a continued fraction increase towards the limit of the continued fraction, see Proposition \ref{prop basicNcfprop}.\vspace{10pt}\\

{\bf Case C:} $\tilde{\mu} = a_1, \dots, a_{k-1},a_k, \dots, a_{\ell} \text{ and } \nu = a_1, \dots, a_{k-1}, b_k,\dots, b_{m}$ where $a_k \neq b_k$.	\vspace{10pt}\\
First we show that $k$ is odd. Note that the first entry of $\nu$ sits at an even position in the continued fraction $\cf_{p/q}=[\mu,2,\delta,1,1,\nu]$, since $\mu$ and $\delta $ both have odd length. Thus, if $k$ was even, then the $k$-th entry $b_k$ in $\nu$ sits at an odd position in the continued fraction, and by Remark \ref{rem 1} this implies that it is equal to the preceding entry, thus $a_{k-1}=b_k$. On the other hand, (\ref{eq 1}) implies  $a_k=a_{k-1}$ and  we get
$a_k=b_k$, a contradiction.
Thus
 $k$ must be odd.
 
 We distinguish two subcases.
  Suppose first that $a_k =1$. Then $a_{k+1} =1$, and since $a_k \neq b_k$, $b_k =2$. Then $[\nu]$ is larger than its odd convergent $[a_1, \dots, a_{k-1},2]$ and the even convergent, $[a_1, \dots, a_{k-1},1,1]$, of $[\tilde{\mu}]$ is larger than $[\tilde{\mu}]$. Therefore by transitivity, $[\nu] > [\tilde{\mu}]$.
  
Now suppose that  $a_k =2$. We shall show that this case is impossible. Consider the Markov snake graph corresponding to the continued fraction $[\mu,2, \delta,1,1,\nu]$ in the $(q,p)$-rectangle, see Figure \ref{fig 2}. As in case A, we denote the lattice point on the south-east vertex of the $a_k$ tile by $(x_1,y_1)$, note however that now, since $k$ is odd, this tile corresponds  to an upper left corner in the snake graph. Also as in case A, we denote  the lattice point on the south-east vertex of the last tile in $\delta$ by $(x_2,y_2)$. But now we let $(x_3,y_3)$ be the lattice point one unit directly above the south vertex of the $b_{k+1}=1$ edge. 

\begin{figure}
\begin{center}
\scalebox{.8}{ 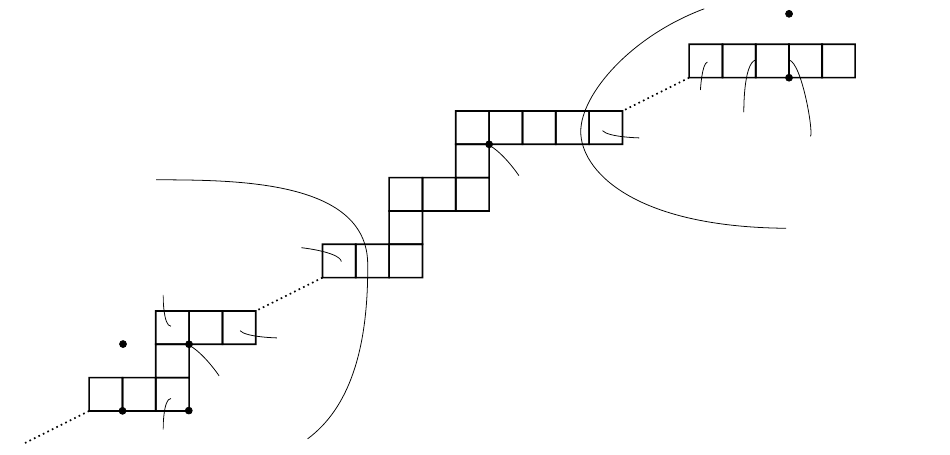} 
 \caption{Proof of Theorem \ref{thm 01} Case C; $k$ is odd.}\label{fig 2}
\end{center}
\end{figure}

 Since the lattice point $(x_3,y_3)$ lies above the snake graph, it also  lies above the Christoffel lattice path. To obtain a contradiction, we shall show that $(x_3,y_3)$ lies below the line $\ell_{p/q}$. Equivalently, we shall show $\frac{y_3}{x_3} < \frac{p}{q}$. Since $(x_2,y_2)$ lies on the Christoffel lattice path, we know that $\frac{y_2}{x_2} < \frac{p}{q}$, hence it suffices to show $\frac{y_3}{x_3} <\frac{y_2}{x_2}$.

Similar to case A, 
we introduce the following notation.
\[\begin{array}{rcl}
a&=& \frac{1}{2}\,\#\{i\in\{1,2,\ldots,k-1\}\mid a_i=2\}\\[5pt]
b&=& a+ \frac{1}{2}\,\#\{i\in\{1,2,\ldots,k-1\}\mid a_i=1\}\\[5pt]
c&=& \frac{1}{2}\,(\textup{number of twos between $\mu$ and $\nu$ in $\cf_{p/q}$})=\frac{1}{2}(1+ \textup{(number of twos in $\delta$)}).
\end{array}\]
The difference with case A is that in the definition of $a$ and $b$, the index $i$ is allowed to be $k-1$.

Then $x_2-x_1 = b+c $ and $y_2-y_1 = a+c $, because now the point $(x_1,y_1)$ is a vertex in an upper left corner tile.  Similarly $x_3-x_1=2b+c+2$, where the extra 2 again accounts for the horizontal segment of length 3/2 between $(x_2,y_2)$  and $\calg(\nu)$ 
and the  tile whose vertical edges are labeled $b_k,b_{k-1}$ of width 1/2. Also $y_3-y_1=2a+c+1$.
We thus have the following coordinates.

\begin{equation}
\label{eq 3}(x_2,y_2) = ( x_1 + b+c, y_1 +a+c) \hspace{10pt} \text{and} \hspace{10pt} 
(x_3,y_3) = (x_1+ 2b+c+2, y_1+2a+c+1). 
\end{equation}

Therefore, in order to show that $\frac{y_3}{x_3} <\frac{y_2}{x_2}$, it suffices to show
\[(x_1 + b+c)(y_1+2a+c+1) < (x_1+ 2b+c+2)(y_1 +a+c).\]

Expanding and collecting like terms shows that this inequality is equivalent to 
\[			\dfrac{a+1}{b+2} < \dfrac{y_1 +c-1}{x_1+c-2}.\]

Now, the point $(x_1 +c-2, y_1+c-1)$ lies above the line $\ell_{p/q}$, because $(x_1-1,y_1)$ does, see Figure \ref{fig 2}. Therefore $\frac{p}{q} <\frac{y_1 +c-1}{x_1+c-2}$, and hence it suffices to show that $	\frac{a+1}{b+2} < \frac{p}{q}$.
Using the coordinates (\ref{eq 3}), we see that
$$\dfrac{a+1}{b+2}= \dfrac{ y_2 -( y_1+c-1)}{x_2 - (x_1+c-2)}. $$

Therefore $\frac{a+1}{b+2} $ is the slope of the line segment from $(x_1 +c-2, y_1+c-1)$ to $(x_2,y_2)$, and  since $(x_1 +c-2, y_1+c-1)$ is above the line $\ell_{p/q}$ and to the left of $(x_2,y_2)$, and $(x_2,y_2)$ lies below the line $\ell_{p/q}$ (since it is on the Christoffel path), we have $\frac{a+1}{b+2} < \frac{p}{q}$. This completes the proof.
	\end{proof}

The theorem has the following immediate consequence on the continuant polynomials.

\begin{cor} \label{cor 01} With the notation of Theorem \ref{thm 01}
	\[N[\mu] N[^-\nu]<N[\mu^-]N[\nu].\]
	\end{cor}

\begin{proof}
The theorem yields $[\tilde{\mu}]< [\nu]$, which means $\cfrac{N[\tilde\mu]}{N[^-\tilde\mu]}<\cfrac{N[\nu]}{N[^-\nu]}$.
The result now follows, because $ N[ \tilde{\mu}] =N[\mu]$ and  $ N[^- \tilde{\mu}] =N[\mu^-]$,  by  equation (\ref{eqnreversal}).
	\end{proof}

%
%
%

\section{Proof of Conjecture \ref{constant num conjecture}} \label{sect conjectures}%

The purpose of this section is to provide a proof for the fixed numerator conjecture seen in \cite{A} and reworded in Conjecture~\ref{constant num conjecture}. In order to do so, we will prove a more general statement, Theorem~\ref{markovordering thm}. 

\subsection{A preparatory lemma} We start with a lemma on the difference between the continued fractions $\cf_{p/q}$ and $\cf_{p/(q+1)}$.
\begin{lem}
 \label{lem 5}
 Let $p$ and $q$ be positive integers such that $p<q$. 
  
 (a) If the lattice point $(a,b)$ with $a<p, b<q$ lies between the two line segments $\ell_{p/q}$ and $\ell_{p/q+1}$ then none of the points $(a-1,b),(a+1,b), (a,b-1), (a,b+1)$ lies between the line segments.
 
 (b) If $\cf_{p/q}=[\mu,2,\delta,1,1,\nu]$ and $\cf_{p/q+1}=[\mu',1,1,\delta,2,\nu']$, where $\mu,\mu'$ have the same odd length, then all entries of the sequence $\delta $  are equal to 2.
\end{lem}

\begin{proof}
(a) The maximal horizontal distance between the line segments $\ell_{p/q}$ and $\ell_{p/(q+1)}$ is 1, and it is attained at the endpoints $(q,p), (q+1,p)$. The maximal vertical distance is attained at the points $(q,p)$ and $(q,pq/(q+1))$, and it is equal to $p/(q+1)<1$. This proves (a).

(b) By definition, the Christoffel paths of $(q,p) $ and $(q+1,p)$ go through the same lattice points except for those points  that lie between the line segments $\ell_{p/q}$ and $\ell_{p/q+1}$. Suppose $(a,b)$ is a lattice point that lies between the line segments. 
Then by part (a) we know that both Christoffel paths go through the points $(a,b-1) $ and $(a+1,b)$. We distinguish two cases. 

If the point $(a+1,b+1)$ does not lie between the lines then we have the situation illustrated in  the left picture  of Figure \ref{fig 3}, where both paths go through the point $(a+2,b)$. The corresponding snake graphs are indicated in red or blue and the common tiles in purple. The continued fraction $\cf_{p/q+1}$ is $[\ldots, 1,1,2,2,\ldots]$ where the 1,1 corresponds to the tile whose southeast vertex is $(a, b-1)$ and the two twos correspond to the tiles with southeast vertex $(a+1,b-1)$ and $(a+1, b)$ respectively. On the other hand,  the corresponding segment of the continued fraction $\cf_{p/q}$ is $[\ldots, 2,2,1,1, \ldots]$. Thus in this case, $\delta=2$.

\begin{figure}
\begin{center}
\scalebox{1}{ 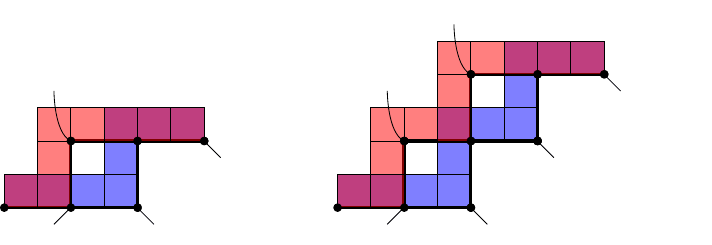} 
\\
\scalebox{1}{ 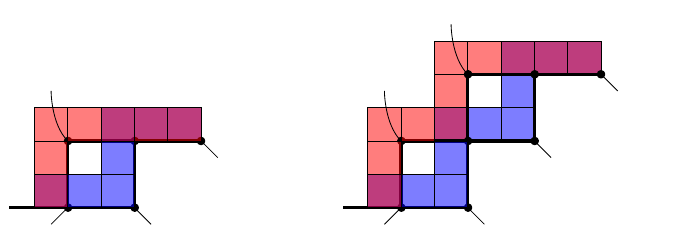}
 \caption{Proof of Lemma \ref{lem 5}}\label{fig 3}
\end{center}
\end{figure}

If the point $(a+1,b+1)$ lies between the lines then we have the situation illustrated in the right picture of Figure \ref{fig 3}, where the path of $(q,p)$ goes through the lattice point $(a+1,b+1)$  while the path of $(q+1,p)$ goes through the point $(a+2,b)$. By part (a), the two paths must meet again at the point $(a+2,b+1)$, after which there are again two cases. Either both paths go through the point $(a+3,b+1)$, in which case the corresponding segments of the continued fraction are $[\ldots 1,1,2,2,2,2,\ldots]$ and $[\ldots, 2,2,2,2,1,1,\ldots]$, and $\delta =2,2,2$;
or the point $(a+2, b+2)$ lies between the lines. Repeating the argument shows part (b).
\end{proof}

\subsection{Main result} 
 The fixed numerator conjecture holds as a result of the following theorem.
\begin{thm}\label{markovordering thm}
	Let $p$ and $q$ be positive integers such that $p<q$. Then 
$m_{p/q} < m_{p/(q+1)}$.
\end{thm}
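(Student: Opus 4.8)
The plan is to realize each of the three quantities $m_{p/q}$, $m_{p/(q+1)}$, $m_{(p+1)/q}$ as the numerator of a continued fraction and then to apply Theorem~\ref{interchageadd2pos}. By Theorem~\ref{thm pm= Markov} and the correspondence between snake graphs and continued fractions of \cite{CS4,CS5}, for all positive integers $r<s$ one has $m_{r/s}=N[a_1,\dots,a_n]$, where $[a_1,\dots,a_n]$ is the continued fraction of the lattice path snake graph on $L_{r/s}$ (this is the Markov snake graph $\calg_{r/s}$ when $\gcd(r,s)=1$, and a genuine lattice path snake graph otherwise). As recalled in Section~\ref{sect conjectures}, this continued fraction is built \emph{entirely} out of replaceable entries $\nu_1,\dots,\nu_{s+r-1}$, each equal to $2$ or to $1,1$, and there are exactly $s+r-1$ of them.

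The crux is the following combinatorial claim: for $p<q$, the continued fraction of the lattice path snake graph on $L_{p/(q+1)}$ is obtained from that of the lattice path snake graph on $L_{p/q}$ by first performing a replacement $2\leftrightarrow 1,1$ on some subset of the replaceable entries and then appending one additional replaceable entry at an end of the word; the same holds for $L_{(p+1)/q}$ in place of $L_{p/(q+1)}$. To prove this, I would track how the Christoffel lattice path of $L_{p/q}$ (or its below-or-on analogue when $\gcd(p,q)>1$) deforms when $q$ or $p$ is incremented by one: exactly one extra horizontal (resp.\ vertical) step appears, the corner and shading structure change only in a controlled region, and the identity $\sum_i a_i=2(q+p)-2$ together with the fact that the $1$'s come in disjoint pairs forces precisely one new replaceable entry, located at an end of the continued fraction up to reversal. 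Since $[\,b_1,\dots,b_m,1,1\,]=[\,b_1,\dots,b_m,2\,]$ by~(\ref{eqn 112end}) and $N[\,b_1,\dots,b_m\,]=N[\,b_m,\dots,b_1\,]$ by~(\ref{eqnreversal}), we may take this new entry to be a single $2$ appended at the very end. I expect this to be the main obstacle: one must check that the change really is this mild --- a batch of replacements plus one appended entry --- rather than something more global, and carry out the bookkeeping, including the reversals, that pins the extra entry to the end.

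Granting the claim, the theorem follows at once. Write the continued fraction of the snake graph on $L_{p/q}$ as $\mu_1,\alpha'_1,\mu_2,\alpha'_2,\dots,\alpha'_{k-1},\mu_k$, where the blocks $\mu_i$ collect the replaceable entries that are left unchanged (an empty block being recorded as the placeholder $0,0$, which changes nothing by Lemma~\ref{lem 00prop}) and $\alpha'_1,\dots,\alpha'_{k-1}$ are the entries that get replaced; then the continued fraction of the snake graph on $L_{p/(q+1)}$ equals $\mu_1,\alpha_1,\mu_2,\alpha_2,\dots,\alpha_{k-1},\mu_k,2$, where $\alpha_i$ is the replacement of $\alpha'_i$. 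If no entry is replaced we take $k=1$ and the inequality is just $N[\mu_1,2]>N[\mu_1]$, which is immediate from~(\ref{eqn end2=end1+}). Since replacement is an involution, this is exactly the situation of Theorem~\ref{interchageadd2pos}, so
\[
m_{p/(q+1)}=N[\mu_1,\alpha_1,\mu_2,\alpha_2,\dots,\alpha_{k-1},\mu_k,2]>N[\mu_1,\alpha'_1,\mu_2,\alpha'_2,\dots,\alpha'_{k-1},\mu_k]=m_{p/q},
\]
and the identical argument with $L_{(p+1)/q}$ gives $m_{p/q}<m_{(p+1)/q}$. This is Theorem~\ref{markovordering thm}.

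Finally, the two conjectures follow by iteration. For the Fixed Numerator Conjecture, chain $m_{p/q}<m_{p/(q+1)}<\dots<m_{p/(q+i)}$, applying Theorem~\ref{markovordering thm} at each step; the intermediate terms are perfect-matching numbers of lattice path snake graphs, which is all transitivity needs, while the two endpoints are genuine Markov numbers since $\gcd(q,p)=\gcd(q+i,p)=1$. For the Fixed Denominator Conjecture, chain $m_{p/q}<m_{(p+1)/q}<\dots<m_{(p+i)/q}$; this is legitimate because $p+i<q$ forces $p+j<q$ for every $0\leq j\leq i$, so the hypothesis $p<q$ of Theorem~\ref{markovordering thm} is satisfied at every step.
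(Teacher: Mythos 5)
Your strategy coincides with the paper's: realize each quantity as the numerator of the continued fraction of a lattice path snake graph, align the two words of replaceable entries, and invoke Theorem~\ref{interchageadd2pos}. The only real divergence is that you flag the ``combinatorial claim'' --- that the word for $L_{p/(q+1)}$ is the word for $L_{p/q}$ with some entries replaced and one replaceable entry appended --- as the main obstacle, to be settled by tracking how the Christoffel path deforms when $q$ or $p$ is incremented. No such tracking is needed: the claim is automatic from facts you already quote. Every continued fraction of a lattice path snake graph is a word $\nu_1,\dots,\nu_m$ in replaceable entries (each $\nu_i$ equal to $2$ or to $1,1$) beginning and ending with $2$, and the count $m=q+p-1$ (from $\sum_i a_i = 2q+2p-2$ together with the pairing of the $1$'s) shows that the word for $m_{p/(q+1)}$ or $m_{(p+1)/q}$ has exactly one more replaceable entry than the word for $m_{p/q}$, its last entry being a $2$. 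Aligning the first $q+p-1$ replaceable entries of the longer word with those of the shorter word position by position, each aligned pair is either equal (absorb it into a block $\mu_j$) or consists of a $2$ and a $1,1$ (an $\alpha_j$ versus $\alpha_j'$, with a $0,0$ placeholder between adjacent such positions), and the leftover final $2$ of the longer word is exactly the appended $2$ in Theorem~\ref{interchageadd2pos}. This works for \emph{any} two words of replaceable entries of these lengths, irrespective of how the two snake graphs are related geometrically, so neither the reversal bookkeeping nor the analysis of a ``controlled region'' is required; this is precisely how the paper argues. Your reduction of Conjectures~\ref{constant num conjecture} and~\ref{constant denom conjecture} to Theorem~\ref{markovordering thm} by iterating through the (possibly non-coprime) intermediate indices likewise matches the paper's remark following the theorem.
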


\begin{proof} We can write the continued fraction of $m_{p/q}$ as a list of $q+p-1$ replaceable entries, $a_i = 1,1$ or 2. Whereas the continued fraction of $m_{p/(q+1)}$ 
 would have $q+p$ replaceable entries.
 We are considering replacements from the continued fraction of $m_{p/(q+1)}$ to the continued fraction of $m_{p/q}$. This means that the first replacement substitutes a pair $1,1$ by a $2$, and then the replacements alternate between replacing a $2$ with a pair $1,1$ and replacing a pair $1,1$ with a $2$ and so on.

  Moreover, Lemma~\ref{lem 5} implies that between two such replacements, both continued fractions consist entirely of 2s. Thus we can write
 \[m_{p/{q+1}}=N[\mu,1,1,\delta,2,\nu,2] \quad \textup{and} \quad  m_{p/{q}}=N[\mu,2,\delta,1,1,\nu'], \]
 where $\mu,\nu,\nu'$ are sequences of ones and twos, 
 and $\delta$ consists only of twos.
 
 First, we prove the result for an even number replacements by induction on the number of {\bf pairs} of replacements.
Suppose first that there is exactly  one pair of replacements, that is $\nu=\nu'$, so that $\nu$ and $\nu'$ contain no replacements. 

We would like to show $N[\mu, 1,1, \delta,2,\nu,2] - N[\mu, 2, \delta,1,1,\nu]> 0$.

\begin{eqnarray*}
\lefteqn{N[\mu, 1,1, \delta,2,\nu,2] - N[\mu, 2, \delta,1,1,\nu]}\\[8pt]
	&=& N[\mu, 1,1, \delta,2,\nu,1] +N[\mu, 1,1, \delta,2,\nu]- N[\mu, 2, \delta,1,1,\nu]\\[8pt]
	&=&N[\mu,1,1]N[\delta,2,\nu,1] + N[\mu,1]N[^-\delta,2,\nu,1]+ N[\mu,1,1]N[\delta,2,\nu] + N[\mu,1]N[^-\delta,2,\nu]\\
	&&-N[\mu,2]N[\delta,1,1,\nu] - N[\mu]N[^-\delta,1,1,\nu]\\[8pt]
	&=& N[\mu,1,1]N[\delta,2]N[\nu,1]+ N[\mu,1,1]N[\delta]N[^-\nu,1]\\
	&&+ N[\mu,1]N[^-\delta,2]N[\nu,1]+ N[\mu,1]N[^-\delta]N[^-\nu,1]\\
	&&+ N[\mu,1,1]N[\delta,2]N[\nu]+ N[\mu,1,1]N[\delta]N[^-\nu]\\
	&&+ N[\mu,1]N[^-\delta,2]N[\nu]+ N[\mu,1]N[^-\delta]N[^-\nu]\\
	&&- N[\mu,2]N[\delta,1,1]N[\nu] - N[\mu,2]N[\delta,1]N[^-\nu] \\
		&&- N[\mu]N[^-\delta,1,1]N[\nu] - N[\mu]N[^-\delta,1]N[^-\nu] 
\end{eqnarray*}
The first negative term plus the fifth term equals zero. The sum of the second negative term and the first term is positive. The sum of the third negative term and the third term is positive. The sum of the last negative term and the seventh term is positive. Therefore $N[\mu, 1,1, \delta,2,\nu,2] - N[\mu, 2, \delta,1,1,\nu]>0$ and this proves the base case of our induction.

\smallskip

Now we  proceed with the inductive step. 
Assume $N[\mu, 1,1, \delta,2,\nu,2] - N[\mu, 2, \delta,1,1,\nu'] >0$ when $\nu$ and $\nu'$ contain $n-1$ pairs of replacements. We would like to show $N[\mu, 1,1, \delta,2,\nu,2] - N[\mu, 2, \delta,1,1,\nu'] >0$ when $\nu$ and $\nu'$ contain $n$ pairs of replacements.
We have

\begin{eqnarray*}
\lefteqn{N[\mu, 1,1, \delta,2,\nu,2] - N[\mu, 2, \delta,1,1,\nu']}\\[8pt]
	&=& N[\mu, 1,1, \delta,2,\nu,1] +N[\mu, 1,1, \delta,2,\nu]- N[\mu, 2, \delta,1,1,\nu']\\[8pt]
	&=&N[\mu, 1,1, \delta,2]N[\nu,1] + N[\mu, 1,1, \delta]N[^-\nu,1] \\
	&&+ N[\mu, 1,1, \delta,2]N[\nu] + N[\mu, 1,1, \delta]N[^-\nu]\\
		&&- N[\mu, 2, \delta,1,1]N[\nu'] - N[\mu, 2, \delta,1]N[^-\nu']\\[8pt]
		&=& N[\mu, 1,1, \delta,2] ( N[\nu,1]+ N[\nu]  ) + N[\mu, 1,1, \delta] ( N[^-\nu,1]+ N[^-\nu] )\\
		&&-  N[\mu, 2, \delta,1,1]N[\nu'] - N[\mu, 2, \delta,1]N[^-\nu'].\\[8pt]
		&=& N[\mu, 1,1, \delta,2]  N[\nu,2] + N[\mu, 1,1, \delta]  N[^-\nu,2]\\
		&&-  N[\mu, 2, \delta,1,1]N[\nu'] - N[\mu, 2, \delta,1]N[^-\nu'].\\
	\end{eqnarray*}

Following Theorem \ref{lem replacedifference}, we may substitute $N[\mu,1,1,\delta,2] = N[\mu,2,\delta,2] + N[\mu^-]N[^-\delta,2]$, $ N[\mu, 1,1, \delta] =  N[\mu, 2, \delta] + N[\mu^-]N[^-\delta]$ and we can also substitute $N[\mu, 2, \delta,1] = N[\mu, 2, \delta]+ N[\mu, 2, \delta^-]$. Thus we have the following equivalent expressions.

\begin{eqnarray}
&&	N[\mu,2,\delta,2] N[\nu,2]  + N[\mu^-]N[^-\delta,2] N[\nu,2] \nonumber \\
&&+ 	N[\mu, 2, \delta] N[^-\nu,2] + N[\mu^-]N[^-\delta] N[^-\nu,2]\nonumber \\
&&-  N[\mu, 2, \delta,1,1]N[\nu'] - N[\mu, 2, \delta]N[^-\nu'] - N[\mu, 2, \delta^-]N[^-\nu']\nonumber \\[8pt]
&=& N[\mu,2,\delta,2] (N[\nu,2]- N[\nu']) + N[\mu, 2, \delta] ( N[^-\nu,2] -N[^-\nu'] )  \label{eq induction}\\
&&+ N[\mu^-]N[^-\delta,2] N[\nu,2]  + N[\mu^-]N[^-\delta] N[^-\nu,2]\nonumber
- N[\mu, 2, \delta^-]N[^-\nu']
	\end{eqnarray}
  Let us show that the first two terms in (\ref{eq induction}) are in fact positive. Indeed, the induction hypothesis says that $N[\mu,1,1,\zd,2,\nu,2]-N[\mu,2,\zd,1,1,\nu']>0 $ if $\nu,\nu'$ contain at most $n-1$ pairs of replacements. Observe that our $\nu$, $\nu'$ in (\ref{eq induction}) are of the form $\nu=[\mu_1,1,1,\zd_1,2,\nu_1],\nu'= [\mu_1,2,\zd_1,1,1,\nu_1']$, for some sequences $\mu_1,\zd_1,\nu_1,\nu_1'$. Since $\nu_1$ and $\nu'_1$ contain only $n-1$ pairs of replacements, the induction shows $N[\nu,2] > N[\nu']$ and $N[^-\nu,2] > N[^-\nu'] $.
 Using the same inequalities on the third and fourth term, we only need to show that the following expression is positive.

\begin{eqnarray*}
\lefteqn{N[\mu^-]N[^-\delta,2]N[\nu']+ N[\mu^-]N[^-\delta]N[^-\nu']- N[\mu, 2, \delta^-]N[^-\nu']}\\[8pt]
&=&N[\mu^-]N[^-\delta,2]N[\nu']+ N[\mu^-]N[^-\delta]N[^-\nu']\\
&&- N[\mu]N[ 2, \delta^-]N[^-\nu']-  N[\mu^-]N[ \delta^-]N[^-\nu']\\[8pt]
&=&N[\mu^-]N[^-\delta,2]N[\nu']- N[\mu]N[ 2, \delta^-]N[^-\nu']\\[8pt]
&=& N[^-\delta,2] (N[\mu^-]N[\nu'] - N[\mu]N[^-\nu'])
	\end{eqnarray*}
By Corollary \ref{cor 01}, this last expression is greater than zero. 

Next, we prove the result for an odd number of replacements. Let \[m_{p/{q+1}}=N[\mu,1,1,\nu,2] \quad \textup{and} \quad  m_{p/{q}}=N[\mu',2,\nu], \]
	where $\mu,\mu',$ and $\nu$ are sequences of ones and twos, $\mu$ and $\mu'$ start with a 2 and contain an even number of replacements. We would like to show that $N[\mu,1,1,\nu,2] - N[\mu',2,\nu]>0$.

\begin{eqnarray*}
	N[\mu,1,1,\nu,2] - N[\mu',2,\nu] &=& N[\mu,2,\nu,2] - N[\mu',2,\nu] +N[\mu^-]N[^-\nu,2]
	\end{eqnarray*}
The difference $N[\mu,2,\nu,2] - N[\mu',2,\nu] $ contains an even number of replacements and is therefore positive. Hence $N[\mu,1,1,\nu,2] - N[\mu',2,\nu]>0$. In any case, $m_{p/q} < m_{p/(q+1)}$.
	\end{proof}
\begin{remark}
 Note that if $p$ and $q$ are relatively prime, we can apply Theorem~\ref{markovordering thm} repeatedly to obtain the inequality in Conjecture \ref{constant num conjecture}.
\end{remark}

{}


\begin{thebibliography}{MSW2}
		
\bibitem[A]{A} M. Aigner,  Markov's theorem and 100 years of the uniqueness conjecture. A mathematical journey from irrational numbers to perfect matchings. {\em Springer, Cham,} 2013.

 \bibitem[BBH]{BBH} A. Beineke, T. Br\"ustle and L. Hille, Cluster-cylic quivers with three vertices and the Markov equation. With an appendix by Otto Kerner. \emph{Algebr. Represent. Theory\/} {\bf 14} (2011), no. 1, 97--112.
\bibitem[CLS]{CLS} I. Canakci, K. Lee, R. Schiffler, On cluster algebras from unpunctured surfaces with one marked point, {\it Proc. Amer. Math. Soc Ser. B} {\bf 2} (2015) 35--49.
\bibitem[CS]{CS} I. Canakci and R. Schiffler, Snake graph calculus and cluster algebras from surfaces, {\it J. Algebra\/}, {\bf 382} (2013) 240--281.
%
\bibitem[CS2]{CS2} I. Canakci and R. Schiffler, Snake graph calculus and cluster algebras from surfaces II: Self-crossing snake graphs, {\it Math. Z.\/} {\bf 281} (1), (2015), 55--102. 
\bibitem[CS3]{CS3} {I. Canakci and R. Schiffler, Snake graph calculus and cluster algebras from surfaces III: Band graphs and snake rings. {\it Int. Math. Res. Not.} IMRN , rnx157 (2017) 1â--82.}
	\bibitem[CS4]{CS4} {I. Canakci and R. Schiffler, Cluster algebras and continued fractions. {\it Compos. Math.\/} {\bf 154} (3), (2018) 565--593.}
\bibitem[CS5]{CS5} I. Canakci and R. Schiffler, Snake graphs and continued fractions, preprint, {\tt arXiv:1711.02461.}
%
%
\bibitem[F]{F}  G. Frobenius, \"{U}ber die Markoffschen Zahlen. S. B. Preuss Akad. Wiss., Berlin (1913) pp. 458-487.
%
%
\bibitem[GKP]{GKP}  R. Graham, D. Knuth, and O. Patashnik, 
Concrete mathematics. A foundation for computer science. Second edition. Addison-Wesley Publishing Company, Reading, MA, 1994.
%
%
%
%
%
\bibitem[LS]{LS} K. Lee and R. Schiffler, Cluster algebras and Jones polynomials, preprint {\tt arXiv:1710.08063v2.}

\bibitem[MSW]{MSW} G. Musiker, R. Schiffler and L. Williams, Positivity for cluster algebras from surfaces, \emph{Adv. Math.}  {\bf 227}, (2011), 2241--2308.
%
%
%
%
\bibitem[P]{P} {J. Propp,  The combinatorics of frieze patterns and Markoff numbers, preprint, {\tt arXiv:math.CO/0511633.}}


\bibitem[R]{R} M. Rabideau, F-polynomial formula from continued fractions,   {\em J. Algebra\/} {\bf 509} (2018), 467--475.
\bibitem[Re]{Re}  C. Reutenauer, From Christoffel Words to Markoff Numbers. Oxford University Press, New York, NY, 2019.
\end{thebibliography}
 \end{document}